\newcommand{\ignore}[1]{}
\DeclareMathOperator{\lat}{lat}
\DeclareMathOperator{\lin}{lin}
\DeclareMathOperator{\supp}{support}
\newcommand{\zB}{\mathcal B}
\newcommand{\zL}{\mathcal L}
\newcommand{\Z}{\mathbb{Z}}
\newcommand{\R}{\mathbb{R}}
\newtheorem{theorem}{Theorem}[section]
\newtheorem{CO}[theorem]{Corollary}
\newtheorem{DE}[theorem]{Definition}
\newcounter{claim-nb}[theorem]
\newtheorem{claim}[claim-nb]{Claim}
\newtheorem*{claim*}{Claim}
\newtheorem*{subclaim*}{Subclaim}
\newcounter{claim-nbs}[section]
\newcounter{subclaim-nb}[claim-nbs]
\newenvironment{cproof}
{\begin{proof}
 [Proof of Claim.]
 \vspace{-1.2\parsep}}
{ \end{proof}}
\title{A Polyhedral Perspective on the Perfect Matching Lattice}
\author{Olha Silina\thanks{Department of Mathematical Sciences, Carnegie Mellon University. 
Email: \texttt{osilina@andrew.cmu.edu}}}
\date{\today}
\begin{document}

\maketitle

\begin{abstract}
    We study the perfect matching lattice of a matching covered graph $G$, generated by the incidence vectors of its perfect matchings. Building on results of Lov\'asz and de Carvalho, Lucchesi, and Murty, we give a polynomial-time algorithm based on polyhedral methods that constructs a lattice basis for this lattice consisting of perfect matchings of $G$. By decomposing along certain odd cuts, we reduce the graph into subgraphs whose perfect matching polytopes coincide with their bipartite relaxations (known as \emph{Birkhoff von Neumann graphs}). This yields a constructive polyhedral proof of the existence of such bases and highlights new connections between combinatorial and geometric properties of perfect matchings.
\end{abstract}

\section{Introduction}

%It is known that if a perfect matching covered graph has no Petersen brick, then its perfect matching lattice is equal to the set of integral points in the linear space of perfect matchings.
%The goal of this paper is to provide an algorithm for finding an integral basis for the lattice. So far, the main algorithm for this is by ear-decompositions. Our goal is to provide a more polyhedral view of this.

Consider a graph $G=(V,E)$ with an even number of vertices. A \emph{perfect matching} is a set of edges $M\subseteq E$ incident with each vertex exactly once. $G$ is \emph{matching covered} if every edge of $G$ belongs to some perfect matching.
The theory of matching covered graphs is a rich area of combinatorics with pioneering results due to Edmonds, Seymour, and Lov\'asz. An object of particular interest is the \emph{perfect matching lattice}, defined as the set of integral linear combinations of the incidence vectors of perfect matchings: \begin{equation*}
    \zL(G): = \lat(\mathcal{M})= \{\sum_{M \in \mathcal{M}}k_M \mathbf{1}_M: k_M \in \Z\},
\end{equation*} where $\mathcal{M}$ denotes the set of all perfect matchings of $G$.
Lov\'asz \cite{lovaszPM} gave a characterization of this lattice based on a structural decomposition of the graph into blocks called \emph{bricks} and \emph{braces}, with the Petersen graph being the only obstruction for this lattice to contain all the integral points in the linear hull of the perfect matchings. %This work of Lov\'asz was inspired by Generalized Berge-Fulkeron Conjecture which is a very important problem in matching theory of both practical and theoretical interest.

A natural question is whether the generating set of this lattice, i.e. the perfect matchings of $G$, contains a basis for $\zL(G)$. This question was answered positively by de Carvalho, Lucchesi, and Murty \cite{dimension} who proved it through ear-decompositions of a graph. In this paper, we present an algorithmic analogue of this result, mainly:
\begin{theorem}\label{thm:main}
    There is a polynomial time algorithm that, given a matching covered graph $G$, finds a lattice basis for $\zL(G)$ consisting of incidence vectors of perfect matchings of $G$.
\end{theorem}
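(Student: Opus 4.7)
The plan is to follow the outline suggested in the abstract: first reduce the problem via a polyhedral decomposition to the Birkhoff--von Neumann (BvN) case, then solve the BvN case directly by explicit polyhedral means, and finally lift and assemble the local bases into a basis for $\zL(G)$.

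\textbf{Step 1 (decomposition).} I would compute in polynomial time a laminar family of tight cuts of $G$ and iteratively apply the tight cut decomposition of Lov\'asz: contracting each shore of a tight cut $\partial(X)$ produces two smaller matching covered graphs $G/X$ and $G/\bar X$, and continuing until no nontrivial tight cut remains yields a list of bricks and braces. Some bricks may fail to be BvN; for these I would invoke the additional odd-cut decomposition indicated in the abstract (using separating cuts or the equivalent structural moves of de Carvalho--Lucchesi--Murty) to further split them into subgraphs whose perfect matching polytopes coincide with their bipartite relaxation. Each step is polynomial and the decomposition tree has only $O(|V(G)|)$ nodes.

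\textbf{Step 2 (BvN base case).} For a BvN graph $H$, the perfect matching polytope is the explicit polyhedron $\{x \geq 0 : x(\delta(v)) = 1 \text{ for all } v\}$, whose dimension $d$ is computable directly from $|E(H)|$, $|V(H)|$, and $\rank$ of the incidence matrix. I would construct $d+1$ perfect matchings of $H$ whose incidence vectors are affinely independent by starting with any one perfect matching $M_0$ (found via Edmonds' algorithm) and, for each edge $e \notin M_0$, finding via matching augmentation a perfect matching $M_e \ni e$, greedily keeping those $M_e$ for which $\mathbf{1}_{M_e} - \mathbf{1}_{M_0}$ is linearly independent from previous difference vectors. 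A short argument exploiting the fact that symmetric differences of perfect matchings span the cycle space of $H$ should upgrade this collection of affinely independent matchings to a genuine lattice basis of $\zL(H)$, not just a generating set in the affine hull.

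\textbf{Step 3 (lifting and assembly).} Because every cut produced in Step 1 is tight and both shores remain matching covered, any perfect matching of a contracted component can be extended in polynomial time to a perfect matching of $G$ by combining it with a compatible matching on the opposite side. I would apply this lifting to every basis element produced in Step 2 and, working bottom-up along the decomposition tree, assemble them into a candidate basis for $\zL(G)$. Correctness follows inductively from the compatibility of the perfect matching lattice with tight cut decomposition: $\zL(G)$ decomposes, up to the controlled contribution of the cut edges, as the sum of the lattices on the two shores.

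\textbf{Main obstacle.} The delicate part is Step 3: verifying that the lifts of independent generators on both sides of a tight cut glue together to give integer-linearly (not merely affinely, nor $\mathbb{Q}$-linearly) independent vectors of the right cardinality in $\zL(G)$. This requires a careful polyhedral accounting of how dimensions and the index of the generated sublattice transform under a tight cut, replacing the inductive ear-decomposition argument of de Carvalho, Lucchesi, and Murty. Once this compatibility is established in the BvN leaves via the explicit polyhedron of Step 2, the inductive passage up the decomposition tree should carry the conclusion to $G$.
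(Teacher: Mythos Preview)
Your outline has the right high-level shape, but there are two genuine gaps, and what you flag as the ``main obstacle'' is in fact the part the paper disposes of by citing known composition results (their Theorem~\ref{thm:basis-comb}).

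\textbf{Step 2 is not a lattice basis construction.} Greedily collecting affinely independent matchings $M_e$ gives you a $\mathbb{Q}$-linear basis for $\lin(\mathcal{M})$, not an integral basis for $\zL(H)$. The sentence ``a short argument exploiting the fact that symmetric differences span the cycle space should upgrade this'' is doing work it cannot do: symmetric differences spanning the cycle space says something about the \emph{span} of your vectors, not about the determinant of the matrix they form. The paper's algorithm is essentially different: at each step it finds an edge $e$ such that $x_e\geq 0$ is facet-defining for the current polytope, records any matching $M$ with $e\in M$, and then deletes $e$. This forces a triangular structure (each recorded matching uses an edge absent from all later ones), so the matrix of incidence vectors is unit upper-triangular on those edge coordinates, and integrality of the basis is automatic. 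Your greedy scheme has no such structure.

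\textbf{Step 1/3 mishandles non-BvN bricks.} You assert that ``every cut produced in Step 1 is tight.'' For the tight-cut decomposition this is true, but a non-BvN brick by definition has \emph{no} nontrivial tight cut, so you cannot further decompose it that way. The paper instead runs the BvN algorithm optimistically on any brick; if some LP returns a fractional vertex $x^*$, that vertex certifies a \emph{separating but non-tight} odd cut $C$ with $x^*(C)<1$ (and refining $C$ so that both $C$-contractions are near-bricks without Petersen bricks is itself nontrivial---their Section~\ref{sec:sep-find}). Because $C$ is not tight, the composition $\zB_1\odot\zB_2$ of bases for the two $C$-contractions only spans the facet $\{x(C)=1\}$, one dimension short. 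The missing ingredient is a perfect matching $M^*$ with $|M^*\cap C|=3$, together with a parity argument (their Theorem~4.2) showing that $\zB_1\odot\zB_2\cup\{\mathbf{1}_{M^*}\}$ is an \emph{integral} basis, not merely a linear one. None of this appears in your plan. You also do not mention the Petersen graph, which must be handled separately since for it $\lin(\mathcal{M})\cap\Z^E\neq\zL(G)$.
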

% The main contribution of this paper is an alternative approach to base construction based on polyhedral theory. Furthermore, we identify a class of graphs for which the basis can be constructed through a very simple inductive process. Polyhedral techniques are emerging in this area with hope that they can shed more light on the matching theory in problems such as Generalized Berge Fulkerson Conjecture.
The main contribution of this paper is a new approach to basis construction for the perfect matching lattice based on polyhedral theory. In addition to the algorithmic result, we identify a class of graphs where a lattice basis can be obtained by a simple inductive process. Moreover, we connect the combinatorial and polyhedral properties of the perfect matchings, bridging our approach and structural methods of de Carvalho, Lucchesi, and Murty. More broadly, our polyhedral techniques offer new tools that may prove valuable in tackling longstanding open problems in matching theory. %We believe these techniques have the potential to extend beyond the lattice basis setting and inspire further developments in the perfect matching theory.

To begin with, the \emph{perfect matching polytope} $PM(G)$ is the convex hull of its perfect matching incidence vectors and is described by the following set of inequalities (Edmonds, \cite{edmonds1965maximum}):
\begin{equation}\label{eq:pm-general}
    PM(G)=\left\{x\in \R^A_{+}: \begin{array}{cc}
    x(\delta(v))= 1 &  \forall v\in V\\
     x(C)\geq 1 &  \forall C \textmd{ odd cut}
\end{array}\right\}
\end{equation}
Here, a \emph{cut} $C = \delta(U)$ for some $U\subseteq V$ is the set of edges with exactly one endpoint in $U$; the sets $U$ and its complement $V\backslash U$ are the \emph{shores} of $C$. A cut is \emph{odd} if both of its shores have an odd cardinality. 

The main idea of the paper is to utilize odd cut decomposition methods to reduce our graph to a number of graphs whose perfect matching polytope coincides with its \emph{bipartite relaxation}:\begin{equation}\label{eq:pm-bipartite}
    P(G):=\left\{x\in \R^A_{+}: x(\delta(v))= 1  \forall v\in V \right\}.
\end{equation} 
Matching covered graphs for which $PM(G)=P(G)$ are called \emph{Birkhoff von Neumann (BvN)} graphs. All bipartite graphs and some non-bipartite graphs, in particular certain types of bricks (to be defined later) are BvN. Our algorithm for finding a basis for $\zL(G)$ either finds a set of $\dim(P(G))$ linearly independent integral vertices of $P(G)$ or finds a facet of $PM(G)$induced by an odd cut constraint $x(C)\geq 1$. We then use this facet to further decompose the graph into smaller ones until eventually the algorithm successfully finds a basis.

The paper is organized as follows. In Section \ref{sec:prelim} we discuss a way to decompose the graph into simple pieces and show how to reduce the basis construction for $\zL(G)$ to these pieces. In Section \ref{sec:bvn} we describe a basis construction algorithm for \emph{Birkhoff von Neumann (BvN)} graphs based on polyhedral techniques and discuss what happens when applying this algorithm to a non-BvN graph. In Section \ref{sec:put-together} we show how to decompose non-BvN graphs into BvN ones using a specific type of odd cuts in a process similar to the tight cut decomposition. In Section \ref{sec:sep-find} we provide an algorithm to find an appropriate cut to be used in the above mentioned decomposition. Finally, in Appendix \ref{sec:classical} we draw connections between our polyhedral view of the basis and the classical approach based on \emph{ear decompositions}.  

\section{Preliminaries}\label{sec:prelim}
\subsection{Tight cut decomposition}
Let $G=(V,E)$ be a perfect matching covered graph. An odd cut $C$ is \emph{tight} if every perfect matching of $G$ intersects it exactly once. Polyhedrally, these are the odd cuts whose corresponding constraint in (\ref{eq:pm-general}) is an implicit equality. There are two types of odd cuts of particular interest.
\begin{DE}
    A pair $\{u,v\}$ of vertices in a matching covered graph $G$ is a \emph{$2$-separation} if $G-u-v$ has at least two connected components of even cardinality. A set $B\subseteq V$ of at least $2$ vertices is a \emph{barrier} if $G-B$ has exactly $|B|$ odd connected components.
\end{DE}
It is quite straightforward to see that if $\{u,v\}$ is a $2$-separation in a matching covered $G$ with $U_1$ and $U_2$ being the (even) components of $G-u-v$, then $C:=\delta(U_1\cup \{u\})$ and $C':=\delta(U_1\cup \{v\})$ are non-trivial tight cuts. We refer to these as \emph{$2$-separation cuts}.
Similarly, if $B$ is a barrier with $V_1, V_2, \ldots, V_{|B|}$ being odd components of $G-B$, then each of $\delta(V_i)$ is a tight cut. We refer to these types of cuts as \emph{barrier cuts}.
The following result highlights the importance of the above definitions:
\begin{theorem}[Theorem $4.7$ in \cite{edmonds1982brick}]\label{thm:tight-cut-find}
    If a perfect matching covered graph has a non-trivial tight cut then it has a barrier or $2$-separation tight cut.
\end{theorem}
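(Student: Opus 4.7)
The plan is to pick a non-trivial tight cut $C = \delta(U)$ with $|U|$ minimum (so $|U|$ is odd and at least $3$), and then analyze its boundary $T := \{v \in U : \delta(v) \cap C \neq \emptyset\}$ on the $U$-side. The argument splits on $|T|$.

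The cases $|T| \leq 2$ can be settled by simple parity counts on $G - T$. Since every edge of $C$ is incident to $T$, the graph $G - T$ decomposes as $G[U \setminus T] \sqcup G[V \setminus U]$. When $|T| = 2$, both halves $U \setminus T$ and $V \setminus U$ have odd total size, so each contributes at least one odd component, giving $\odd{G - T} \geq 2$; combined with Tutte's bound $\odd{G - T} \leq |T| = 2$ (using that $G$ has a perfect matching), equality holds and $T$ is a barrier. When $|T| = 1$, say $T = \{u\}$, the vertex $u$ is a cut vertex of $G$, and for any $v \in V \setminus U$ the graph $G - \{u, v\} = G[U \setminus \{u\}] \sqcup G[(V \setminus U) \setminus \{v\}]$ has at least two components on an even number of vertices, so $\odd{G - \{u,v\}} \in \{0, 2\}$ by Tutte. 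If it equals $2$, then $\{u, v\}$ is a barrier; if it equals $0$, all components are even and $\{u, v\}$ is a $2$-separation.

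The case $|T| \geq 3$ is the main obstacle, since the direct parity argument above may fail (one could have $\odd{G - T} < |T|$). Here I would exploit the extremality of $U$ by contracting the larger shore to obtain $G_1 := G / (V \setminus U)$, which is matching covered because $C$ is tight. By minimality of $|U|$, any non-trivial tight cut of $G_1$ would pull back, using standard uncrossing of tight cuts, to a non-trivial tight cut of $G$ with shore properly inside $U$, contradicting our choice. Hence $G_1$ has no non-trivial tight cut and is therefore a brick or a brace, with the neighborhood of the contracted vertex $\bar{u}$ being exactly $T$. The plan is then to use the structure of bricks and braces, possibly after strengthening the extremal choice (for example minimizing $|T|$ subject to $|U|$ minimum), to pin down a barrier or $2$-separation of $G$ inside $U$: in the brace (bipartite) case a Hall-type argument on $T$ should suffice, while in the brick case a more delicate structural step is needed.

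The hardest part will be lifting such local information in $G_1$ back to a genuine barrier or $2$-separation of $G$. Bricks and braces can harbor barrier configurations in which all odd components are singletons, a feature compatible with having no non-trivial tight cut, so the translation must carefully track how the edges of $C$ interact with both shores of $G$ to ensure the barrier or $2$-separation property survives in $G$. I expect this lifting to require iterated application of the uncrossing lemma to deform the cut produced in $G_1$ until it witnesses a barrier or $2$-separation back in $G$.
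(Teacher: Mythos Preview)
The paper does not prove this statement; it is quoted from Edmonds, Lov\'asz, and Pulleyblank and used as a black box for the algorithmic sections. So there is no proof in the paper to compare against, and your proposal has to stand on its own.

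Your treatment of $|T|\le 2$ is essentially correct, though the case $|T|=1$ is vacuous: a matching covered graph on at least four vertices is $2$-connected, so no cut vertex exists. The brace sub-case of $|T|\ge 3$ can also be completed along the lines you hint at. If $G_1=G/(V\setminus U)$ is bipartite with colour classes $A'\ni\bar u$ and $B'$, then every vertex of $A:=A'\setminus\{\bar u\}\subseteq U$ is isolated in $G-B'$: bipartiteness of $G_1$ forbids edges inside $A$, and $T=N_{G_1}(\bar u)\subseteq B'$ forbids edges from $A$ to $V\setminus U$. Hence $G-B'$ has at least $|A|+1=|B'|$ odd components and $B'$ is a barrier of $G$.

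The genuine gap is the brick sub-case. If $G_1$ is a brick, the only structural facts you can invoke without circularity are that $G_1$ is $3$-connected and bicritical (this direction of the brick characterisation is the easy one). But $3$-connectivity of $G_1$ merely restates $|T|\ge 3$, and bicriticality only tells you that $G[U]-v$ has a perfect matching for every $v\in U$; neither fact manufactures a barrier or a $2$-separation of $G$. Your proposal to ``iterate the uncrossing lemma'' has nothing to uncross against: minimality of $|U|$ guarantees there is no second non-trivial tight cut meeting $U$ properly, and you have imposed no control on the $V\setminus U$ side. As written, the brick case is not a proof but a hope. The argument in Edmonds--Lov\'asz--Pulleyblank does not pass through a minimal contraction; it works directly with the components of $G[U]$ and $G[V\setminus U]$ and the bipartite incidence structure between them across the tight cut, and extracts the barrier or $2$-separation from that analysis.
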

This theorem implies that tight cuts can be found efficiently. Indeed, it suffices to check for $2$-separation and barrier cuts. The former can be identified by going over all pairs of vertices $\{u,v\}$ and counting the number of components in $G-u-v$. For barrier detection, we use the following characterization for \emph{maximal} barriers, i.e. barriers that are not contained in larger barriers: \begin{theorem}[Theorem $3.2$ in \cite{LovaszOntheStr}]\label{thm:barrier-find}
    The maximal barriers are exactly the equivalence classes under the equivalence relation on $V$: $x\sim y$ iff $G-x-y$ has no perfect matching.
\end{theorem}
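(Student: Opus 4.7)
The plan is to translate the relation $\sim$ into a statement about barriers via the Tutte--Berge formula, then establish an uncrossing closure property for barriers from which both the equivalence-relation structure and the identification with maximal barriers will follow.

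\textbf{Step 1 (Reformulation via Tutte--Berge).} Write $\mathrm{oc}(H)$ for the number of odd connected components of a graph $H$. I first claim that for distinct $x,y \in V$, $x \sim y$ if and only if some barrier of $G$ contains $\{x,y\}$. If such a barrier $B$ exists, then setting $S := B \setminus \{x,y\}$ gives $\mathrm{oc}((G-x-y)-S) = \mathrm{oc}(G-B) = |B| = |S|+2 > |S|$, so by Tutte--Berge $G-x-y$ has no perfect matching. Conversely, if $G-x-y$ has no perfect matching, Tutte--Berge yields $S \subseteq V\setminus\{x,y\}$ with $\mathrm{oc}((G-x-y)-S) > |S|$; because $|V|$ is even, a parity check (the number of odd components has the same parity as the total vertex count) strengthens this to $\mathrm{oc}((G-x-y)-S) \geq |S|+2$. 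Since $G$ is matching covered, the inequality $\mathrm{oc}(G-T) \leq |T|$ holds for every $T \subseteq V$, so $B := S \cup \{x,y\}$ satisfies $\mathrm{oc}(G-B) = |B|$ and is therefore a barrier containing $\{x,y\}$.

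\textbf{Step 2 (Uncrossing closure).} The heart of the proof is the following \emph{Key Claim}: if $B_1, B_2$ are barriers of $G$ with $B_1 \cap B_2 \neq \emptyset$, then $B_1 \cup B_2$ is also a barrier. This is the main obstacle. I would proceed by fixing a perfect matching $M$ and observing that, for any barrier $B$, the cardinality identity $\mathrm{oc}(G-B)=|B|$ forces $M$ to induce a bijection between $B$ and the odd components of $G-B$ via the $M$-edges leaving $B$, with no $M$-edge contained in $B$. Using this structure simultaneously for $B_1$ and $B_2$, one analyzes the common refinement of the two odd-component partitions of $V \setminus B_1$ and $V \setminus B_2$ and counts how the odd components of $G - B_1$ are split when $B_2 \setminus B_1$ is removed; this yields the lower bound $\mathrm{oc}(G-(B_1\cup B_2)) \geq |B_1\cup B_2|$, which combined with the matching-covered upper bound $\mathrm{oc}(G-T)\leq|T|$ forces equality.

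\textbf{Step 3 (Equivalence relation and maximal barriers).} Reflexivity of $\sim$ is immediate since $G-x$ has odd order, and symmetry is trivial. Transitivity follows from Steps 1 and 2: if $x \sim y$ and $y \sim z$ for distinct $x,y,z$, then there are barriers $B_1 \supseteq \{x,y\}$ and $B_2 \supseteq \{y,z\}$ meeting at $y$, and $B_1 \cup B_2$ is a barrier containing $\{x,z\}$ by the Key Claim, giving $x \sim z$. Thus $\sim$ is an equivalence relation on $V$. For the identification with maximal barriers: for each $v$ contained in some barrier, the Key Claim guarantees that $B_{\max}(v) := \bigcup\{B : v \in B,\, B \text{ a barrier of } G\}$ is itself a barrier---the unique maximal barrier through $v$. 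Two distinct vertices $x, y$ are $\sim$-equivalent if and only if some common barrier contains them, if and only if $B_{\max}(x) = B_{\max}(y)$, which establishes the claimed correspondence (vertices lying in no barrier form singleton $\sim$-classes).
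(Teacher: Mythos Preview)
The paper does not prove this statement at all---it is quoted as Theorem~3.2 of Lov\'asz's paper and used as a black box---so there is nothing to compare against. Evaluating your proposal on its own merits: Steps~1 and~3 are correct and constitute the standard reduction of the theorem to the \emph{Key Claim} in Step~2 (the barrier--uncrossing lemma). That lemma is true, but your sketch does not actually prove it. You set up the right picture (the $M$-edges out of a barrier $B$ biject with the odd components of $G-B$, and $B$ is $M$-stable), yet the sentence ``one analyzes the common refinement of the two odd-component partitions \ldots\ and counts how the odd components of $G-B_1$ are split when $B_2\setminus B_1$ is removed; this yields the lower bound'' hides the whole difficulty. A naive count from that setup gives only $\mathrm{oc}(G-(B_1\cup B_2))\le |B_1\cup B_2|-2p$, where $p$ is the number of $M$-edges between $B_1\setminus B_2$ and $B_2\setminus B_1$; to get the reverse inequality you must show $p=0$ and that every component receives at most one such $M$-edge, and it is precisely here that the hypothesis $B_1\cap B_2\neq\emptyset$ must enter. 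Your sketch never explains how. One clean way to close the gap (and the route most textbooks take) is to bypass uncrossing entirely: apply the Gallai--Edmonds decomposition to $G-v$, show that in a matching covered graph $C(G-v)=\emptyset$, and identify the $\sim$-class of $v$ with $A(G-v)\cup\{v\}$, which is then directly seen to be the unique maximal barrier through $v$.

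A minor second point: the paper's definition of barrier requires $|B|\ge 2$, so singleton $\sim$-classes (which do occur, e.g.\ in any brick) are not ``maximal barriers'' under that convention. You flag this in passing but do not reconcile it; strictly speaking the theorem as stated needs the convention that singletons count as barriers.
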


Once a tight cut is identified, one could \emph{contract} it: for each of the two shores, consider a graph obtained by shrinking it into a single vertex. Thus, we obtain two smaller graphs. Repeating this process for the new set of graphs, we end up with a list of graphs obtained by contracting each shore of a tight cut into a single vertex. This process is known as \emph{tight cut decomposition} and results in a set of matching covered graphs with no tight cuts. Non-bipartite matching covered graphs with no tight cuts are called \emph{bricks}, and bipartite ones are called \emph{braces}. It is also known that the list of bricks and braces obtained as a result of this decomposition does not depend on the choice of tight cuts. Furthermore, any inclusion-wise maximal laminar family of tight cuts results in a brick and brace decomposition \cite{lovaszPM}. Since any laminar family on $|V|$ elements has size at most $n-1$ (we are interested in non-trivial tight cuts), the total number of bricks and braces is at most $n$, giving the following result:
\begin{CO}
    A tight cut decomposition of a graph can be found in polynomial time.
\end{CO}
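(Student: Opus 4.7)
The plan is to apply Theorem \ref{thm:tight-cut-find} recursively: at each stage, search the current graph $H$ for a non-trivial tight cut; if one is found, split $H$ by contracting each shore separately and recurse on the two smaller matching covered graphs; if none is found, $H$ is a brick or brace and is output as a leaf. The entire decomposition is produced by the resulting recursion tree.

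For the tight-cut search at a single node, I would implement the two cases of Theorem \ref{thm:tight-cut-find} in turn. To look for a $2$-separation, iterate over all $\binom{|V(H)|}{2}$ pairs $\{u,v\}$, compute the connected components of $H-u-v$, and report $\delta(U_1\cup\{u\})$ as soon as two components of even cardinality appear; this costs $O(n^2)$ component computations, each linear in the size of $H$. To look for a non-trivial barrier, use Theorem \ref{thm:barrier-find}: for every pair $x,y$ test whether $H-x-y$ admits a perfect matching using Edmonds' polynomial-time matching algorithm, and form the equivalence classes of the relation $x\sim y$. Each class is a maximal barrier; any class $B$ with $|B|\geq 2$ yields non-trivial barrier cuts $\delta(V_i)$ at the odd components $V_i$ of $H-B$. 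By Theorem \ref{thm:tight-cut-find}, if $H$ has any non-trivial tight cut at all, this procedure returns one, and otherwise $H$ is correctly declared a brick or brace.

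For the overall running time, I would invoke the bound already noted in the text: the tight cuts produced along the entire recursion lift back to a laminar family of non-trivial tight cuts in the original graph $G$ on $n$ vertices, since contraction of a shore preserves the property that later cuts are nested inside or disjoint from earlier ones. Such a laminar family has at most $n-1$ members, so the recursion tree contains at most $n-1$ internal nodes and $O(n)$ leaves. At each node the work described above is polynomial in $n$ (dominated by $O(n^2)$ matching-existence checks), so multiplying gives a polynomial overall running time. The main point that needs to be verified carefully is that contracted tight cuts really do lift to a laminar family on $V(G)$, which is where the polynomial bound on recursion depth comes from; the remaining steps are standard applications of Edmonds' matching algorithm and connected-component enumeration.
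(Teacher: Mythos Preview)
Your proposal is correct and follows essentially the same approach as the paper: detect tight cuts via the $2$-separation and maximal-barrier tests suggested by Theorems~\ref{thm:tight-cut-find} and~\ref{thm:barrier-find}, recurse on the two contractions, and bound the total number of recursive calls by the $O(n)$ size of the resulting laminar family of non-trivial tight cuts. The paper's argument is precisely the paragraph of discussion immediately preceding the corollary, which your write-up expands with explicit complexity bookkeeping.
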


\subsection{Lattice theory and dimension results}

Given a finite set of vectors $S =\{\mathbf{v}_1, \ldots, \mathbf{v}_k\} \subset \R^d$, their \emph{linear hull} is the set of all possible linear combinations $\lin(S) = \{\sum_{i=1}^k \alpha_i \mathbf{v}_i : \alpha_i \in \R\}$. An \emph{integral basis} for $\lin(S)$ is a set $B$ of linearly independent vectors such that any $\mathbf{z}\in \lin(S)\cap \Z^d$ is an integral linear combination of $B$. Assuming all the vectors have rational coordinates, the \emph{lattice} spanned by $S$ is the set of all integer linear combinations $\lat(S)=\{\sum_{i=1}^k \alpha_i \mathbf{v}_i: \alpha_i \in \Z\}$. In this case, we say that $S$ \emph{spans} $\lat(S)$. It is well known that any lattice $\zL$ has a \emph{basis}, that is, a linearly independent set of vectors that spans $\zL$, however, it need not be contained in the original set of generators. The \emph{dimension} of the lattice is equal to the dimension of its linear hull.

Since we are not interested in edges that do not belong to any perfect matching, it is convenient to delete from $G$ all edges that are not in any perfect matching, thus making the graph matching covered.
The number of bricks in a tight cut decomposition of a matching covered graph affects the dimension of its perfect matching polytope. In particular, the dimension of the perfect matching polytope is \begin{equation}\label{eq:pm-dimension}
    \dim(PM(G)) = |E'|-|V|+1-b,
\end{equation} where $b$ denotes the number of bricks in a tight cut decomposition of $G$ and $E'$ is the set of edges that belong to a perfect matching \cite{edmonds1982brick}, with $E=E'$ for matching covered $G$. Furthermore, the dimension of the perfect matching lattice and its linear hull is \begin{equation}\label{eq:pm-lat-dimension}
    \dim (\lat(G)) = |E'|-|V|+2-b.
\end{equation}
We will call \emph{near-bricks} all matching covered graphs $G$ with exactly one brick.
The following fact due to Lov\'asz connects the linear hull of perfect matchings with $\zL(G)$: \begin{theorem}[Theorem $6.3$ in \cite{lovaszPM}]\label{thm:lovasz}
    In a matching covered graph $G=(V,E)$, let $\mathcal{M}$ be the set of perfect matchings. Then any $x \in \lin(\mathcal{M})\cap \Z^E$ satisfies $2x\in \zL(G)$. Furthermore, if $G$ has no Petersen bricks then $\lin(\mathcal{M})\cap \Z^E=\zL(G)$. 
\end{theorem}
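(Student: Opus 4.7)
The plan is to reduce the statement to the case of bricks and braces using the tight cut decomposition, and then analyze each case separately. Consider the finite abelian quotient $Q(G) := (\lin(\mathcal{M}) \cap \Z^E)/\zL(G)$, which is finite because $\zL(G)$ spans $\lin(\mathcal{M})$ by (\ref{eq:pm-lat-dimension}). The theorem is then equivalent to showing that $2Q(G) = 0$ in general, and $Q(G) = 0$ when $G$ has no Petersen brick. The first step would be a decomposition lemma: for a non-trivial tight cut $C = \delta(U)$ with contractions $G_1, G_2$, every perfect matching of $G$ glues uniquely from a pair of perfect matchings of $G_1, G_2$ that agree on the single edge of $C$ they use. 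Exploiting this bijection, together with the fact that tightness makes lifting of integer combinations from the pieces to the whole consistent, I would show $Q(G) \cong Q(G_1) \oplus Q(G_2)$. Iterating this over a full tight cut decomposition reduces the statement to individual bricks and braces.

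For braces the perfect matching polytope equals its bipartite relaxation $P(G)$, whose constraint matrix is totally unimodular. Consequently every integer point in $\lin(\mathcal{M}) \cap \Z^E$ is already a $\Z$-combination of perfect matchings by a standard unimodularity argument, so $Q(G) = 0$. This is essentially classical bipartite matching theory (Birkhoff, K\"onig) and will not require any extra idea.

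The substantial work is in handling bricks. The approach would be to use the structural theory of bricks via ear decompositions: every brick can be built from a small base by repeatedly adding single and double ears in a controlled way, and at each step a partial basis of perfect matchings can be extended, conditional on a single $\Z/2\Z$-valued parity obstruction vanishing. The central combinatorial claim, which I expect to be the hardest part, is that this obstruction vanishes for every brick other than the Petersen graph. For Petersen itself a direct computation with its six perfect matchings, via the Smith normal form of the corresponding incidence matrix, should show $Q \cong \Z/2\Z$, with an explicit integer representative such as half the sum of all six perfect matching incidence vectors certifying both the nontriviality of $Q$ and the factor of $2$ in the general statement.

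The main obstacle is the brick analysis: simultaneously (i) exhibiting enough perfect matchings to span $\zL(B)$ up to index at most $2$ along an arbitrary ear decomposition, and (ii) isolating Petersen as the unique source of nonvanishing obstruction. An alternative route, more in the polyhedral spirit of this paper, would be to show that many bricks are already Birkhoff-von Neumann, so that they fall under the brace argument directly, and to reduce the remaining non-BvN bricks via further odd cut decompositions, leaving Petersen as the single residual exception to be handled by hand.
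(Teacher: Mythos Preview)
The paper does not prove this theorem at all: it is quoted as Theorem~6.3 of Lov\'asz~\cite{lovaszPM} and used as a black box. So there is no ``paper's own proof'' to compare against; your proposal is in effect a sketch of the literature proof.

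As a sketch of that literature proof, your outline has the right architecture. The reduction to bricks and braces via tight cuts is exactly how Lov\'asz proceeds, and the brace case is indeed dispatched by total unimodularity of the bipartite incidence matrix. Your decomposition lemma $Q(G)\cong Q(G_1)\oplus Q(G_2)$ is morally correct but takes some care: the map $Q(G)\to Q(G_1)\times Q(G_2)$ induced by restriction is well-defined and injective (this is essentially the basis-combining statement in \Cref{thm:basis-comb}), but surjectivity requires showing that any pair of integral classes on the two sides can be realized by a single integral vector on $G$ agreeing on $C$, which uses that for a tight cut every edge of $C$ is covered by perfect matchings on both sides.

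Where your sketch is honest but thin is the brick case. The sentence ``the obstruction vanishes for every brick other than the Petersen graph'' hides essentially all of the depth: Lov\'asz's original paper is long precisely because this step requires a delicate structural analysis of bricks, and the cleanest modern route goes through the Carvalho--Lucchesi--Murty theorem (quoted in this paper's appendix as \Cref{thm:brick-invariant-edge}) that every non-Petersen brick other than $K_4$ and $\overline{C_6}$ has a removable edge whose deletion leaves a near-brick with no Petersen brick. That theorem is itself substantial and does not fall out of a generic ear-decomposition argument; a bare ``$\Z/2\Z$-valued parity obstruction along an ear decomposition'' will not by itself isolate Petersen without invoking something of comparable strength. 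Your alternative polyhedral route (reduce non-BvN bricks via separating cuts) is closer to what the present paper develops, but note that the paper \emph{uses} \Cref{thm:lovasz} rather than reproving it, so that route would be circular here.
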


\subsection{Petersen graph}

In the latter sections, we will reduce lattice basis construction to bricks and braces. In light of \Cref{thm:lovasz}, we describe how to deal with Petersen graph separately.

Suppose that one of the graphs obtained after tight cut decomposition is a Petersen graph $G$ (potentially, with parallel edges). To construct the basis for the Petersen graph, first consider the case where $G$ has no parallel edges. In this case, there are exactly $6$ perfect matchings, as illustrated in \Cref{pic:petersen} and they form a linearly independent set. Via a simple linear algebraic argument, it can be checked that these vectors form a basis for $\zL(G)$.

\begin{figure}[!ht]\label{pic:petersen}
\centering
\begin{tikzpicture}[scale=1.4, every node/.style={circle, draw, fill=black, inner sep=2pt}, spoke/.style={line width=5pt, color=yellow!80!black, opacity=0.5}]

% Parameters
\def\R{2}       % Radius of outer pentagon
\def\r{1}       % Radius of inner pentagon
\def\shift{18}

% Outer vertices (o1 to o5)
\foreach \i in {1,...,5} {
  \node (o\i) at ({72*(\i-1)+\shift}:\R) {};
}

% Inner vertices (i1 to i5), smaller radius
\foreach \i in {1,...,5} {
  \node (i\i) at ({72*(\i-1)+\shift}:\r) {};
}

% Outer cycle edges
\foreach \i [evaluate=\i as \j using {mod(\i,5)+1}] in {1,...,5} {
  \draw (o\i) -- (o\j);
}

% Inner star edges
\foreach \i/\j in {1/3, 3/5, 5/2, 2/4, 4/1} {
  \draw (i\i) -- (i\j);
}

% Spokes
\foreach \i in {1,...,5} {
  \draw (o\i) -- (i\i);  
}

% Spokes (highlighted)
\foreach \i in {2,...,5} {
  \draw[spoke] (o\i) -- (i\i);
} \draw[spoke] ($(o1)-(0,1pt)$) -- ($(i1)-(0,1pt)$);

% Spokes (highlighted)
\draw[spoke, blue]  ($(o1)+(0,1pt)$) -- ($(i1)+(0,1pt)$);
\draw[spoke, blue] (o2) -- (o3);
\draw[spoke, blue] (o4) -- (o5);
\draw[spoke, blue] (i2) -- (i4);
\draw[spoke, blue] (i3) -- (i5);

\end{tikzpicture}
\caption{The Petersen graph. Its perfect matchings are the central one (yellow) and all possible rotations of the one in blue.}
\end{figure}
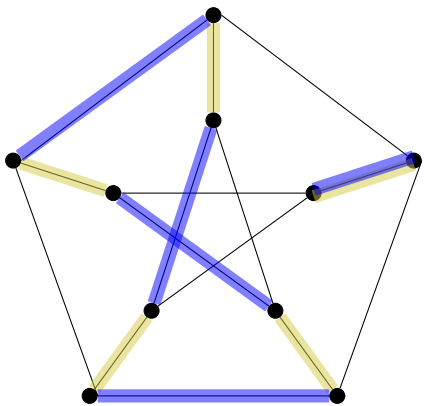

Now, if one adds to an edge $e$ of $G=(V,E)$ a parallel edge $e_2$ to obtain a new graph $G'=(V,E')$, the new basis can be obtained from a lattice basis $\zB$ for $\zL(G)$ as follows.
% Let for every $M_i$ from $\zB$, let $M_i'=\begin{cases}
%     M_i & \textmd{ if } e\notin M_i, \\ M_i\backslash \{e\}\cup \{e_1\} & \textmd{ if } e\in M_i
% \end{cases}.$ 
Pick one perfect matching $M_1 \in \zB$ containing $e$ and let $M^*=M_1\backslash\{e\}\cup \{e_2\}$. Consider the set $\zB'=\zB\cup \{\mathbf{1}_{M^*}\}$. We claim that $\zB'$ is a lattice basis for $\zL(G')$. 
Indeed, it can be readily checked that the set $\zB'$ is linearly independent and $M^*$ is the only element using $e_2$. Furthermore, \eqref{eq:pm-lat-dimension} implies that $\dim(\zL(G'))=\dim (\zL(G'))$ so $\zB'$ is a linear basis. To prove it is integral, consider $y \in \lin(\zB')\cap \Z^{E'}$, so that $y = \sum_{M\in \zB'} \alpha_M \mathbf{1}_M$. Since $y(e_2)$ is integral, it means that $\alpha_{M^*}$ is integral. Hence $y-\alpha_{M^*}\mathbf{1}_{M^*} \in \lin (\zB) \cap \Z^{E}$, implying that it is indeed an integral linear combination of $\zB$, as wanted.

Repeating this replication process until we reach the correct edge multiplicity results in a lattice basis for the target graph and can be done in polynomial time.

\subsection{Tight cut combining}\label{sec:basis-comb}
The following approach describes combining bases for the matching lattices of two matching covered graphs into one using an odd cut. See Section 6.3.1 of \cite{Lucchesi2024} for more details.
\begin{DE}
    Given a matching covered graph $G$, we call an odd cut $C$ \emph{separating} if both $C$-contractions are matching covered.
\end{DE} Equivalently, $C$ is separating if and only if every edge of $G$ belongs to a perfect matching $M$ with $|C\cap M|=1$. All tight cuts are separating, but there could be other examples. 

Let $G$ be a matching covered graph with a non-trivial separating odd cut $C$. Let $G_1$ and $G_2$ be the two $C$-contractions of $G$. Additionally, say that both $G_i$ are matching-covered graphs with $\zB_1=\{\mathbf{x}^1,\mathbf{x}^2,\ldots, \mathbf{x}^{d_1}\}$ and $\zB_2=\{\mathbf{y}^1,\mathbf{y}^2,\ldots, \mathbf{y}^{d_2}\}$ being lattice bases for $\lat(G_1)$ and $\lat(G_2)$ consisting of perfect matchings. 
\begin{DE}[composition]
    For two vectors $\mathbf{v}_1 \in \R^{E_1}$ and $\mathbf{v}_2\in \R^{E_2}$ such that $\mathbf{v}_1(e) = \mathbf{v}_2(e)$ for all $e \in E_1\cap E_2$, define their \emph{composition} $\mathbf{w}:=\mathbf{v}_1\odot \mathbf{v}_2$ as a vector in $\R^{E_1\cup E_2}$ whose restriction on $E_i$ equals $\mathbf{v}_i$ for $i=1,2$. We define \emph{composition} $\zB_1\odot \zB_2$ as follows. For any edge $e \in C$, define $I(e):=\{i: \mathbf{x}^i(e) = 1\}$ and $J(e):=\{j: \mathbf{y}^j(e)=1\}$ to be the indices of elements of $\zB_i$ that contain edge $e$. Write $I(e) = \{i_1,\ldots,i_k\}$ and $J(e) = \{j_1,\ldots,j_\ell\}$ for convenience; notice that $k\geq 1$ and $\ell \geq 1$. Let \begin{align*}
            \mathbf{z}^e_t&:=\mathbf{x}^{i_1} \odot \mathbf{y}^{j_t} \quad &t=1,\ldots,\ell\\
            \mathbf{z}^e_{\ell+t}&:=\mathbf{x}^{i_{1+t}}\odot \mathbf{y}^{j_1}  &t=1,\ldots,k-1.
        \end{align*} 
    We define $\zB_1 \odot \zB_2 = \{\mathbf{z}^e_i:e\in C, 1\leq i\leq |I_e|+|J_e|-1\}$
\end{DE}
Compositions of two lattice bases have useful properties, as shown in e.g. \cite{abdi2025integralbasepm}, \cite{abdi2024str}:
\begin{theorem}\label{thm:basis-comb}
    Let $G$ be a matching covered graph and let $C$ be an odd cut.
    Let $\zB:=\zB_1\odot \zB_2$. Then: \begin{itemize}
        \item[(i)]  $\zB$ consists of linearly independent vectors;
        \item [(ii)] any integral vector in $\lin(\zB)$ can be expressed as an integral linear combination of $\zB$;
        \item [(iii)]$\lin(\zB) = \lin(PM(G)\cap \{x: x(C)=1\})$;
        \item[(iv)] $|\zB|=|\zB_1|+|\zB_2|-|C|$.
    \end{itemize}
\end{theorem}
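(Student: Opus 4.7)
The plan is to exploit the structural fact that $C$ is an odd cut, so in each $C$-contraction $G_i$ every perfect matching uses exactly one edge of $C$. In particular, $\{I(e):e\in C\}$ partitions $\{1,\ldots,d_1\}$ and $\{J(e):e\in C\}$ partitions $\{1,\ldots,d_2\}$. Part (iv) is then a one-line computation from $\sum_{e\in C}|I(e)|=d_1$ and $\sum_{e\in C}|J(e)|=d_2$, once one checks that the $\mathbf{z}^e_t$'s are pairwise distinct: vectors indexed by different $e$ disagree on $C$, while within a fixed $e$ they disagree on either $E_1$ or $E_2$ by construction.

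For parts (i) and (ii), I would take a linear (resp.\ integer) combination $\sum_{e,t}\lambda_{e,t}\mathbf{z}^e_t$ equal to zero (resp.\ to some integral vector) and project onto $E_1$ and $E_2$ in turn. Since each $\mathbf{x}^i\in\zB_1$ belongs to a unique $I(e)$, each $\mathbf{x}^i$ appears in the $E_1$-projection with coefficient either a single $\lambda_{e,|J(e)|+t'}$ (for the ``non-anchor'' indices $i^e_{1+t'}$) or the sum $\sum_{t=1}^{|J(e)|}\lambda_{e,t}$ (for the anchor $i^e_1$). Linear independence of $\zB_1$ makes all of these zero in case (i); its lattice-basis property makes them integers in case (ii). Running the same argument for $E_2$ and $\zB_2$ shows $\lambda_{e,t}$ is zero (resp.\ integral) for all $t\geq 2$, and combining with the $E_1$-relation then forces $\lambda_{e,1}$ to be zero (resp.\ integral) as well.

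Part (iii) is the delicate step. The inclusion ``$\subseteq$'' is immediate, as each $\mathbf{z}^e_t=\mathbf{x}^i\odot\mathbf{y}^j$ is literally a perfect matching of $G$ meeting $C$ exactly in $e$. For the reverse, take $M\in PM(G)$ with $M\cap C=\{e^*\}$, split $M=M_1\cup M_2$ with $M_i\subseteq E_i$, and expand $\mathbf{1}_{M_1}=\sum_i\alpha_i\mathbf{x}^i$ and $\mathbf{1}_{M_2}=\sum_j\beta_j\mathbf{y}^j$ in the bases $\zB_1,\zB_2$. Grouping by $e\in C$, $A_e:=\sum_{i\in I(e)}\alpha_i=\sum_{j\in J(e)}\beta_j$ equals $1$ for $e=e^*$ and $0$ otherwise, so each composition $\hat{\mathbf{x}}^e\odot\hat{\mathbf{y}}^e:=(\sum_{i\in I(e)}\alpha_i\mathbf{x}^i)\odot(\sum_{j\in J(e)}\beta_j\mathbf{y}^j)$ is well-defined and $\sum_{e\in C}(\hat{\mathbf{x}}^e\odot\hat{\mathbf{y}}^e)=\mathbf{1}_M$ (check by restricting to $E_1$, $E_2$, and $C$). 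The crux is the identity
\begin{equation*}
\hat{\mathbf{x}}^e\odot\hat{\mathbf{y}}^e \;=\; \sum_{i\in I(e)}\alpha_i(\mathbf{x}^i\odot\mathbf{y}^{j_1}) \;+\; \sum_{j\in J(e)}\beta_j(\mathbf{x}^{i_1}\odot\mathbf{y}^j) \;-\; A_e(\mathbf{x}^{i_1}\odot\mathbf{y}^{j_1}),
\end{equation*}
verified by projection to $E_1$ and $E_2$ together with $\sum\alpha_i=\sum\beta_j$. Every term on the right is a member of $\zB$ (of the form $\mathbf{z}^e_t$ or $\mathbf{z}^e_{|J(e)|+t'}$ or $\mathbf{z}^e_1$), so the left lies in $\lin(\zB)$, and summing over $e\in C$ places $\mathbf{1}_M\in\lin(\zB)$.

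The main obstacle is exactly this last identity. The operator $\odot$ is linear on its compatibility subspace but is \emph{not} bilinear, so the tempting expansion $\sum_{i,j}\alpha_i\beta_j(\mathbf{x}^i\odot\mathbf{y}^j)$ both fails to agree with the composition and produces cross-terms $\mathbf{x}^{i_{1+t'}}\odot\mathbf{y}^{j_t}$ that are not themselves in $\zB$. The correct move is to fix the anchor pair $(i_1,j_1)$ from the construction of $\zB$ and realize the product using only the $|I(e)|+|J(e)|-1$ anchored compositions that actually appear in $\zB$, at the cost of a single inclusion--exclusion correction $-A_e(\mathbf{x}^{i_1}\odot\mathbf{y}^{j_1})$; this is the main technical content of the argument.
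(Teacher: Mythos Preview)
The paper does not prove this theorem; it is quoted from \cite{abdi2025integralbasepm,abdi2024str}. Your argument is correct and is in fact the standard one: parts (i), (iii), (iv) go through exactly as you describe, and the anchored inclusion--exclusion identity you isolate for (iii) is precisely the right device for avoiding the non-existent bilinearity of~$\odot$.

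One small point on (ii). When you say ``its lattice-basis property makes them integers,'' you are implicitly using that each $\zB_i$ is an \emph{integral} basis for $\lin(\zB_i)$, i.e.\ $\lat(\zB_i)=\lin(\zB_i)\cap\Z^{E_i}$. Being a lattice basis for $\zL(G_i)$ alone does not guarantee this: if $G_i$ had a Petersen brick, there would exist integral $w\in\lin(\zB_i)\setminus\zL(G_i)$, and composing such a $w$ with $\sum_{e}w(e)\,\mathbf{y}^{j_1^e}$ yields an integral vector in $\lin(\zB)$ whose unique $\zB$-coordinates are the non-integral $\alpha_i$'s. So (ii) as literally stated needs the stronger hypothesis. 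This is not a defect of your proof so much as of the theorem's phrasing; the paper only ever invokes (ii) after explicitly noting that the $C$-contractions are Petersen-free, so that $\lin(\zB_i)\cap\Z^{E_i}=\zL(G_i)$ and your projection argument goes through verbatim.
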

% This theorem reduces our problem to finding a basis for bricks and braces, as we can then merge the bases together in polynomial time. In the remaining sections, we present several ways of dealing with the construction of bases for bricks and braces. 
Since composition of two bases can be done in polynomial time, it suffices to prove Theorem \ref{thm:main} for bricks and braces. Then, to obtain a matching lattice basis for an arbitrary graph $G$, it suffices to consider its tight cut decomposition into (linearly many) bricks and braces $G_1, G_2, \ldots, G_k$ and then compose the resulting bases using Theorem \ref{thm:basis-comb}. The rest of the paper is dedicated to proving the following fact:
\begin{theorem}\label{thm:main-brick}
    There is a polynomial time algorithm that, given a brick or a brace $G$, finds a lattice basis for $\zL(G)$ consisting of incidence vectors of perfect matchings of $G$.
\end{theorem}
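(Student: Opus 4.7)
The plan is a recursive polyhedral algorithm with three stopping cases. First, every brace is bipartite and therefore BvN, so the algorithm of Section~\ref{sec:bvn} delivers a basis of $\zL(G)$ directly from integral vertices of $P(G)=PM(G)$. Second, the Petersen brick is handled by the explicit six-matching construction described before Section~\ref{sec:basis-comb}, together with the replication procedure for parallel edges. Otherwise $G$ is a non-bipartite, non-Petersen brick; I would invoke the Section~\ref{sec:bvn} subroutine, which either succeeds (if $G$ happens to be BvN) or terminates by exhibiting a facet-defining inequality $x(C)\geq 1$ of $PM(G)$ supported on some odd cut $C$.

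When the subroutine fails, I would feed $C$ to the algorithm of Section~\ref{sec:sep-find} to produce a non-trivial \emph{separating} odd cut $C'$ still inducing a facet of $PM(G)$, whose two shore-contractions $G_1,G_2$ are matching covered and strictly smaller than $G$. I would recurse on each $G_i$ via Theorem~\ref{thm:main}: tight-cut-decompose into bricks and braces, apply Theorem~\ref{thm:main-brick} recursively on each piece, and reassemble using Theorem~\ref{thm:basis-comb}. This yields perfect matching bases $\zB_1$ and $\zB_2$ of $\zL(G_1)$ and $\zL(G_2)$, which compose into $\zB := \zB_1 \odot \zB_2$; by Theorem~\ref{thm:basis-comb}(iii), this $\zB$ spans $\lin(PM(G)\cap \{x(C')=1\})$. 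Unless $C'$ is tight, I would augment $\zB$ with a single perfect matching $M$ of $G$ satisfying $|M\cap C'|\geq 3$ — such an $M$ exists because $x(C')\geq 1$ is facet-inducing rather than an implicit equality, and $|M\cap C'|$ is odd. Linear independence of $\zB\cup\{\mathbf{1}_M\}$ is immediate since $\mathbf{1}_M$ lies outside the hyperplane $\{x(C')=1\}$; the cardinality matches $\dim(\zL(G))$ by combining~\eqref{eq:pm-lat-dimension} with Theorem~\ref{thm:basis-comb}(iv); and integrality of the resulting lattice follows by an argument analogous to the Petersen parallel-edge case — any integral $y$ in the linear hull splits as $\alpha_M\mathbf{1}_M + z$ with $z\in\lin(\zB)$, comparing $y(C')$ against the odd integer $\mathbf{1}_M(C')$ forces $\alpha_M\in\Z$, and Theorem~\ref{thm:basis-comb}(ii) then finishes the argument for $z$.

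The hard part will be establishing that every non-BvN brick admits a non-trivial facet-defining separating odd cut which can be constructed in polynomial time — this is the technical heart of Sections~\ref{sec:put-together} and~\ref{sec:sep-find}, and without it the failure branch of the BvN subroutine could stall on a graph that is neither BvN nor decomposable. A secondary concern is controlling the total runtime: because each contraction strictly reduces $|V|$, the recursion has depth $O(|V|)$, and the per-stage cost is dominated by polynomial-time routines (tight cut decomposition, the BvN subroutine of Section~\ref{sec:bvn}, and the composition operation of Theorem~\ref{thm:basis-comb}), so the total work should remain polynomial once those components are in place.
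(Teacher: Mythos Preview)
Your overall architecture matches the paper's: handle braces by the BvN routine, handle the Petersen brick explicitly, and for every other brick either run the BvN routine to completion or, on failure, find a facet-defining separating odd cut, recurse on the two contractions, compose, and augment by one extra matching. Two points, however, are genuine gaps rather than details you can wave past.

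First, your integrality argument for $\zB\cup\{\mathbf{1}_M\}$ does not work as stated. Writing $y=\alpha_M\mathbf{1}_M+z$ with $z\in\lin(\zB)$ gives $y(C')=\alpha_M\,|M\cap C'|+\sum_i\beta_i$, and knowing only that $|M\cap C'|$ is odd and $y(C')\in\Z$ does not force $\alpha_M\in\Z$; the $\beta_i$ are real and can absorb any fractional part. The paper resolves this with a \emph{second} linear functional: since every perfect matching has $n/2$ edges, $\tfrac{2}{n}\mathbf{1}_E^{\!\top}y=\alpha_M+\sum_i\beta_i$, and subtracting from $\mathbf{1}_{C'}^{\!\top}y$ yields $2\alpha_M=\mathbf{1}_{C'}^{\!\top}y-\tfrac{2}{n}\mathbf{1}_E^{\!\top}y$. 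A parity check (using that $y\in\zL(G)$ is an integer combination of matchings, each meeting $C'$ in an odd number of edges) shows the right-hand side is even. This step requires $|M\cap C'|=3$, not merely $\geq 3$; with $|M\cap C'|=2k+1$ you would only get $2k\mid(\cdot)$. The existence of such an $M$ is exactly Theorem~\ref{thm:char-three}, not just the non-tightness of the cut.

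Second, you omit a condition the paper works hard to enforce in Theorem~\ref{thm:find-robust} and \S\ref{sec:sep-find}: the separating cut must be chosen so that \emph{neither} $C$-contraction has a Petersen brick. Without this, your recursion still returns bases for $\zL(G_1),\zL(G_2)$, but Theorem~\ref{thm:basis-comb}(ii) as used in the paper needs each $\zB_i$ to be an integral basis for $\lin(\zB_i)\cap\Z^{E_i}$, which equals $\zL(G_i)$ only when no Petersen brick is present (Theorem~\ref{thm:lovasz}). This is the reason for the extra ``avoiding Petersen bricks'' step in \S5.3, where the cut is shifted across four vertices of an incipient Petersen contraction. A minor related point: the failure mode of the BvN routine is a fractional vertex $x^*$ of $P(G)$, from which one first extracts a violated odd cut $x^*(C)<1$; only then does the machinery of \S\ref{sec:sep-find} upgrade it to separating, facet-defining, and Petersen-free.
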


\section{A polyhedral approach: Birkhoff von Neumann graphs}\label{sec:bvn}
\subsection{An algorithm for BvN graphs}
In the following sections we discuss a polyhedral approach to basis construction. To demonstrate the main idea, we first look at the graphs $G$ whose perfect matching polytope coincides with its \emph{bipartite relaxation}: \begin{equation}\label{eq:bip}
     P(G):=\left\{ x\in \R^E_{\geq 0}: \begin{array}{cc}
    x(\delta(v))= 1&  \forall v\in V
\end{array}\right\}.
\end{equation} In other words, all points satisfying non-negativity and degree constraints of (\ref{eq:bip}) also satisfy the odd cut constraints of (\ref{eq:pm-general}).  Recall that graphs with this property are called \emph{Birkhoff von Neumann (BvN)}. 

Bipartite graphs are always BvN (see, e.g. 18.1 of \cite{schrijver2003combinatorial}). Non-bipartite graphs are BvN if and only if there are two vertex-disjoint odd cycles whose complement contains a perfect matching due to the fact that every vertex of the bipartite relaxation is half-integral (30.2 in \cite{schrijver2003combinatorial}). Despite this, the problem of testing BvN in polynomial time is still open.

BvN graphs admit the following polyhedral algorithm that continuously reduces the dimension of the polytope.
% , similar to the construction of a Hilbert basis in \cite{Gerards87}.
At each step $t$, we have a set of current edges of the graph $E^t \subseteq E$ which is initially equal to $E$. We consider the polytope $P^t:= \{x \in \R^{E_t}: x(\delta_{G[E_t]}(v)) = 1 \forall v\in V, x\geq \mathbf{0}\}$. Its vertices are precisely the perfect matchings of $G[E_t]$ since $G$ was BvN and $G[E_t]$ is just a restriction of $PM(G)$ to the faces defined by $x_e = 0$ for $e\in E\backslash E_t$. The algorithm will select a facet of the current polytope $P^t$ defined by some $x_e \geq 0$ and go to the facet defined by this inequality. At the same time, augment the current basis $\zB$ by an incidence vector of any perfect matching $M_e$ that contains $e$. The total number of steps this algorithm will take is exactly $\dim(PM(G))$ as the dimension of the current polytope drops by one each time.

To find a facet-defining inequality, it suffices to identify an edge $e$ such that removing the constraint $x_e\geq 0$ from the polyhedral description creates some new feasible points, which inevitably would have $x_e <0$. Therefore, such a point exists if and only if the following linear program has a negative optimal solution: \begin{equation}\tag{LP1$(t,e)$}\label{LP1_facet}
    \min\left\{x_e: x\in \R^{E_t}, \begin{array}{cc}
          x(\delta(v))=1  & \forall v \in V \\
           x_{f}\geq 0 & \forall f \in E_t\backslash \{e\}
       \end{array} \right\} .
\end{equation} 
To find an appropriate matching, one can solve a similar linear program, but maximize the value of $x_e$ instead:
\begin{equation}\tag{LP2$(t,e)$}\label{LP2_match}
    \max\left\{x_e: x\in \R^{E_t}, \begin{array}{cc}
          x(\delta(v))=1  & \forall v \in V \\
           x_{f}\geq 0 & \forall f \in E_t
       \end{array} \right\} .
\end{equation}

Algorithm \ref{algorithm:basis-bvn} summarizes this approach.
\begin{algorithm}
\begin{algorithmic}[1]
	\Require{a BvN graph $G$}
	\Ensure{a set of perfect matchings $\zB$ that forms a basis to $\zL(G)$}
	\State $E^0\gets E$, $P^0\gets PM(G)=P(G)$, $\zB \gets\emptyset$;
	\State for $t=0,1,2, \ldots, \dim(PM(G))-1$:
	\State find $e \in E^t$ such that \ref{LP1_facet} has negative objective value;
    \State let $y(e)$ be a vertex solution to \ref{LP2_match};
    \State define $E^{t+1} = E^t \backslash \{e\}$, $P^{t+1} = P^t \cap \{x: x_e = 0\}$, update $\zB = \zB \cup \{y(e)\}$;
	\State \Return{$\zB$}.
\end{algorithmic}
\caption{BvN basis construction.\label{algorithm:basis-bvn}}	
\end{algorithm}

\begin{theorem}\label{thm:algo-correct}
    The result of Algorithm \ref{algorithm:basis-bvn} applied to a matching covered BvN graph is a set $\zB$ of linearly independent indicator vectors of perfect matchings that forms an integral basis for $\zL(G)$.
\end{theorem}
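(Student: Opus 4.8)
The plan is to verify, in order, the three assertions of the theorem: that $\zB$ consists of linearly independent perfect-matching vectors, that $|\zB| = \dim(\zL(G))$ so that $\zB$ is a linear basis for $\lin(\mathcal M)$, and finally that $\zB$ is \emph{integral}, i.e. every vector of $\lin(\zB)\cap \Z^E$ is an integer combination of $\zB$. First I would set up the invariant maintained by the loop: at the start of iteration $t$, the polytope $P^t = PM(G)\cap \{x : x_e = 0 \text{ for } e \in E\setminus E^t\}$ is a face of $PM(G)$ of dimension $\dim(PM(G)) - t$, its vertices are exactly the perfect matchings of $G$ avoiding $E \setminus E^t$ (this uses the BvN hypothesis: $P^t$ is the analogous bipartite-relaxation polytope of $G[E^t]$, whose vertices are half-integral, hence integral once we know they are vertices of a face of $PM(G)$), and $P^t$ is full-dimensional inside the affine subspace $\{x(\delta(v)) = 1 \ \forall v,\ x_e = 0 \ \forall e \notin E^t\}$ cut down by the remaining facets $x_f \ge 0$. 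The key point driving the loop is that so long as $\dim(P^t) > 0$, there must exist an edge $e \in E^t$ for which $x_f \ge 0$ is \emph{not} implied by the other constraints, equivalently for which \ref{LP1_facet} has a strictly negative optimum; otherwise $P^t$ would be a single point. For such an $e$, $P^{t+1} = P^t \cap \{x_e = 0\}$ is a facet of $P^t$, so $\dim(P^{t+1}) = \dim(P^t) - 1$, and by \ref{LP2_match} there is a vertex $y(e)$ of $P^t$ (a perfect matching of $G[E^t]$) with $y(e)_e = 1$; in particular $y(e) \notin P^{t+1}$.

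For linear independence I would argue by the order in which matchings enter $\zB$: the matching $M_e$ added in iteration $t$ satisfies $(\mathbf 1_{M_e})_e = 1$, while every matching added in a later iteration lies in $P^{t+1} \subseteq \{x_e = 0\}$, so it has $e$-coordinate $0$; reading the edges $e$ chosen over the run as an ordered list, the matrix whose rows are the chosen matchings is (up to row/column permutation) lower-triangular with $1$'s on the diagonal, hence the rows are independent. Since the loop runs exactly $\dim(PM(G))$ times and the dimension drops by one each time, $|\zB| = \dim(PM(G)) = \dim(\zL(G)) - 1 \cdot 0$ — here I need the relation $\dim(\zL(G)) = \dim(PM(G)) + 1$ from \eqref{eq:pm-dimension}–\eqref{eq:pm-lat-dimension}; but $\zB$ lives in the affine hyperplane $\{x(\delta(v)) = 1\}$, and all perfect-matching vectors are affinely independent iff linearly independent only after accounting for this, so I would note that $\lin(\zB)$ has dimension $|\zB|$ while $\lin(\mathcal M) = \lin(\zL(G))$ also has dimension $\dim(PM(G)) + 1$; thus I actually need one more vector. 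Re-examining: the standard fact is that $\dim \lin(\mathcal M) = \dim PM(G) + 1$, and $|\zB| = \dim PM(G)$, so $\zB$ alone spans only a hyperplane of $\lin(\mathcal M)$ — I would resolve this by observing that $\zB$ together with \emph{any single} perfect matching $M_0$ (e.g. the first vertex found) spans $\lin(\mathcal M)$, OR, more cleanly, that the intended claim is that $\zB$ is a basis for the lattice $\zL(G)$ in the sense of $\lat$, and one checks $\lat(\mathcal M)$ is generated together with one extra fixed matching; I expect the paper fixes this by seeding $\zB$ with one matching, so I would adjust the bookkeeping accordingly and present $\zB' = \zB \cup \{\mathbf 1_{M_0}\}$ as the basis.

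For integrality — which I expect to be the main obstacle — I would proceed by downward induction on $t$, proving: every $z \in \lin(\mathcal M \cap P^t)\cap \Z^E$ supported on $E^t$ is an integer combination of $\{M_0\} \cup \{M_e : e \text{ chosen at steps} \ge t\}$. The base case $t = \dim(PM(G))$ is $P^t$ a single integral point $= \mathbf 1_{M_0}$, trivial. For the inductive step, take $z$ integral in $\lin(\mathcal M \cap P^t)$; since $M_e \in \zB$ is the unique chosen matching with nonzero $e$-coordinate and $z_e \in \Z$, subtract $z_e \mathbf 1_{M_e}$ — but one must check $z - z_e\mathbf 1_{M_e}$ still lies in $\lin(\mathcal M \cap P^{t+1})$, i.e. has $e$-coordinate $0$ (immediate) and remains in the linear hull of the face $P^{t+1}$. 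The subtlety is that $z - z_e \mathbf 1_{M_e}$ need not be a \emph{nonnegative} point of $P^{t+1}$, only a point of its \emph{linear hull}; but $\lin(\mathcal M \cap P^{t+1}) = \lin(P^{t+1})$ by the vertex description, and since both $z$ and $\mathbf 1_{M_e}$ lie in the affine hull $\{x(\delta(v)) = 1,\ x_f = 0 \text{ for } f \notin E^t\}$, their combination $z - z_e \mathbf 1_{M_e}$ lies in the linear space $\{x(\delta(v)) = (1 - z_e)\cdot 1, \dots\}$ — so I must instead track the \emph{affine} combination coefficient sum, which is exactly why the extra matching $M_0$ and the shift $\dim \zL = \dim PM + 1$ are needed: writing $z = \sum \lambda_M \mathbf 1_M$ with $\sum \lambda_M = 1$ forces the combination to stay in the right hyperplane after subtraction only if we also decrement the coefficient of $M_0$. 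I would therefore carry the invariant in the form "$z$ is an integer \emph{affine} combination summing to some $k \in \Z$ of the matchings indexed by steps $\ge t$ together with $M_0$", peel off $z_e \mathbf 1_{M_e}$ at each step to kill coordinate $e$, and conclude by the base case that the residual is an integer multiple of $\mathbf 1_{M_0}$. Establishing that the residual stays in $\lin(\mathcal M \cap P^{t+1})$ at each peel — using that $P^{t+1}$ is a face of $P^t$ and the BvN integrality of vertices of every such face — is the technical heart, and I would isolate it as a short claim inside the proof.
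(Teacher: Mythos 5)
Your proposal correctly flags a genuine off-by-one gap present in both Algorithm~\ref{algorithm:basis-bvn} and the paper's proof of this theorem. The loop runs for $t = 0, 1, \ldots, \dim(PM(G)) - 1$, so $\zB$ ends with exactly $\dim(PM(G))$ perfect matchings; but by \eqref{eq:pm-dimension}--\eqref{eq:pm-lat-dimension}, $\dim(\zL(G)) = \dim(PM(G)) + 1$, and a set of $\dim(PM(G))$ vectors cannot span $\zL(G)$. Concretely: $K_4$ has $\dim(PM(K_4)) = 2$, yet $\zL(K_4)$ has dimension $3$ and requires all three of its perfect matchings, whereas the algorithm as stated returns only two. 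The paper's proof only establishes that $\zB$ is an integral basis for $\lin(\zB)$ and silently treats this as a basis for $\zL(G)$. Your repair is the right one, but be careful which extra matching you adjoin: not ``any single'' perfect matching, and not ``the first vertex found,'' since a generic matching could already lie in $\lin(\zB)$ (equivalently in $\aff(\zB)$). The canonical choice is the unique vertex $M_0$ of the terminal zero-dimensional polytope $P^{\dim(PM(G))}$. Since this $M_0$ avoids all the deleted edges $e_1,\ldots,e_d$, one sees immediately from the triangular structure that $M_0 \notin \aff(\zB)$, so $\zB' := \zB \cup \{\mathbf 1_{M_0}\}$ has the correct cardinality $\dim(PM(G))+1$ and spans $\lin(\mathcal M)$.

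Beyond this correction, your approach matches the paper's: the triangular structure (each $M_{e_i}$ uses $e_i$ and avoids $e_1,\ldots,e_{i-1}$, while $M_0$ avoids all $e_i$) gives both linear independence and integrality at once. However, your integrality argument is more elaborate than needed: you do not have to track affine-combination coefficient sums or verify that residuals stay in $\lin(\mathcal M \cap P^{t+1})$. Given $z \in \lin(\zB') \cap \Z^E$, write $z = \sum_i \alpha_i \mathbf 1_{M_{e_i}} + \alpha_0 \mathbf 1_{M_0}$. Reading coordinate $e_1$ gives $\alpha_1 = z_{e_1} \in \Z$; subtracting $\alpha_1 \mathbf 1_{M_{e_1}}$ and reading $e_2$ gives $\alpha_2 \in \Z$; iterating yields all $\alpha_i \in \Z$, and the final residual is $\alpha_0 \mathbf 1_{M_0}$, an integral vector with unit entries, forcing $\alpha_0 \in \Z$. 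No polyhedral hull membership is needed; the computation is a purely coordinate-wise triangular peel.
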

\begin{proof}
    Because $G$ is BvN it implies that any face of $P(G)$ has integral vertices. Therefore, all $y(e)$ found as a corner solution to \ref{LP2_match} will indeed be incidence vectors of some perfect matchings of $G$ (in fact, of $G[E_t]$). 
    It is clear that the final set $\zB$ will contain exactly $\dim(PM(G))$ perfect matchings with the property that each one uses an edge not used in the previous ones. This means that the elements of $\zB$ are linearly independent and moreover, create an integral basis for $\lin(\zB)$.
\end{proof}

\subsection{Non-BvN detection}\label{subsec:non-bvn-detect}

As mentioned above, the problem of recognizing BvN graphs in polynomial time is still open.
We will avoid directly addressing this problem and instead we will use the following idea which still allows us to use the BvN algorithm for bricks that are non-BvN.

If, given any brick $G$, the above process terminates in $|E|-|V|+1$ steps and all the output vectors are integral, then they correspond to a basis for $\zL(G)$.
Therefore, the only algorithm obstruction in the non-BvN case is \ref{LP2_match} having a fractional solution $x^*$. This means that $x^*$ is a fractional vertex of $P(G)$ itself, which gives a certificate that $G$ is non-BvN.
As $x^*$ has to be half-integral, its support consists of vertex-disjoint edges of weight $1$ and odd cycles of weight $1/2$. Picking any of the odd cycles, we can consider the odd cut $C'$ whose shore is exactly the vertices of this cycle. Inevitably, we have $x^*(C')=0$.
In \Cref{sec:sep-find}, we describe a combinatorial algorithm to find an odd cut $C$ such that the inequality $x(C)\geq 1$ is violated by $x^*$ with more specific properties:
\begin{theorem}\label{thm:find-robust}
    Let $G$ be a brick that is non-BvN. Let $x^*$ be a fractional vertex of $P(G)$. There is a polynomial time algorithm that finds a separating cut $C$ of $G$ such that $x^*(G)<1$ and both $C$-contractions of $G$ are near-bricks whose brick is not the Petersen brick.
\end{theorem}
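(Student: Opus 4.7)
The plan is to exploit the half-integral structure of $x^*$ and then iteratively refine the cut via tight cut decompositions of the contractions. Since $x^*$ is a fractional vertex of the bipartite relaxation $P(G)$, it is half-integral, and its support decomposes into vertex-disjoint $1$-weight edges together with vertex-disjoint odd cycles of weight $1/2$. Pick any such odd cycle $K$ in the support and set $U_0 := V(K)$. Because every vertex of $U_0$ is already saturated by the two $1/2$-weight edges of $K$ incident to it, no support edge leaves $U_0$, so $x^*(\delta(U_0)) = 0 < 1$. This produces the starting violated odd cut $C_0 := \delta(U_0)$.

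The second step is to modify $U_0$ so that both contractions $G/U$ and $G/\bar U$ become near-bricks. Since $G$ is a brick, $C_0$ is not tight in $G$, yet contracting one shore can introduce new non-trivial tight cuts in the contracted graph. I would iterate the following: while one of the contractions has a non-trivial tight cut, use \Cref{thm:tight-cut-find} together with \Cref{thm:barrier-find} to find such a cut in polynomial time (as either a $2$-separation cut or a barrier cut), lift it back to an odd cut $\delta_G(U')$ of $G$, where $U' \subsetneq U$ or $U' \supsetneq U$ according to whether the contracted vertex lies outside or inside the tight shore, and replace $U$ by the side that still satisfies $x^*(\delta(U')) < 1$. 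The refined shores form a laminar family on $V$, so this terminates after $O(n)$ iterations and leaves contractions with no non-trivial tight cuts, i.e.\ (disjoint unions of) bricks and braces; by choosing the refinement carefully at each step we ensure each contraction is a single matching covered piece, i.e.\ a near-brick.

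For the separating property, I would use the equivalence that a non-tight odd cut $C$ is separating precisely when both $C$-contractions are matching covered; since tight cut refinement of a matching covered graph preserves matching-coverage on both sides, this invariant is automatic along the iteration. For Petersen-avoidance, the plan is to invoke the rigid structure of the Petersen brick (exactly six perfect matchings arranged by its $5$-fold symmetry, no non-trivial barrier or $2$-separation) together with the fact that the image of $K$ persists as a half-weighted odd cycle in the contraction: compare $x^*$ restricted to the Petersen piece against its six perfect matchings and perform one additional internal refinement---either another tight cut produced by adjoining a neighboring vertex across $C$, or a localized vertex swap---to break the Petersen isomorphism without destroying $x^*(C) < 1$ or the separating property.

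The main obstacle I expect is the last step, Petersen-avoidance. Ordinary tight cut refinement has no reason to sidestep a Petersen brick, so the argument must leverage the very specific combinatorial structure of Petersen and show, by case analysis on how the cycle $K$ and the $1/2$-support interact with the ten vertices and six perfect matchings, that a further surgery is always possible. Identifying the right invariant to maintain during this surgery, so that $x^*(C) < 1$, separating-ness, and near-brick-ness are all preserved simultaneously, is the key technical hurdle; the remainder of the algorithm is essentially standard laminar refinement with polynomial-time subroutines.
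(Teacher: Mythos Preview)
Your high-level plan---extract a violated odd cut from an odd cycle of $x^*$, then iteratively refine it via structural cuts in the contractions---is exactly the paper's, but two of the three stages have real gaps. Your refinement loop assumes the contractions are matching covered so that Theorems~\ref{thm:tight-cut-find} and~\ref{thm:barrier-find} apply, and you then assert that matching-coverage is ``automatic along the iteration''; this is circular, because the initial cut $C_0=\delta(V(K))$ is in general \emph{not} separating, so the loop cannot even start. The paper treats this as a separate first phase: in the non-matching-covered contraction one finds a maximal barrier $B$ containing the contracted vertex, and the odd components of its complement yield replacement cuts $F_i$ for which a potential $\phi$ strictly drops, forcing a separating cut after at most $|V|/2$ rounds. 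Your termination argument is also broken: refining on alternating sides can enlarge and then shrink the shore, so the resulting sequence of shores is neither a chain nor laminar; the paper instead shows that each refinement strictly enlarges the face $PM(G)\cap\{x:x(C)=1\}$, which bounds the process by $\dim PM(G)$. (The phrase ``the side that still satisfies $x^*(\delta(U'))<1$'' is also ill-posed: a lifted tight cut is a single cut of $G$, not a choice between two sides; the genuine choice is among the several candidate $F_i$.)

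For Petersen avoidance the fix is both simpler and entirely different from what you sketch. The paper makes no use of $x^*$ or of the six Petersen matchings at this stage: once a $C$-contraction is (up to edge multiplicities) the Petersen graph with contracted vertex labelled $x$, one enlarges the opposite shore by the four explicit vertices $\{y,y',w,w'\}$, so that the Petersen side collapses to a $5$-wheel, and a short dimension count shows the other contraction remains a near-brick. No case analysis on how the cycle $K$ meets the Petersen structure is required.
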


The following result will prove useful. We will provide the proof of this statement in the appendix.
\begin{theorem}\label{thm:fdi-charact}
    In a near-brick $G$, for an odd cut $C$, the following are equivalent: \begin{itemize}
        \item $C$ is separating and $x(C)\geq 1$ defines a facet of the perfect matching polytope $PM(G)$, and
        \item both $C$-contractions are matching-covered near-bricks.
    \end{itemize} 
\end{theorem}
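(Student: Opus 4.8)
I want to prove the equivalence

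\textbf{(A)} $C$ is separating and $x(C)\ge 1$ defines a facet of $PM(G)$,

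\textbf{(B)} both $C$-contractions $G_1,G_2$ are matching-covered near-bricks,

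\noindent for an odd cut $C$ in a near-brick $G$. The natural tool is the dimension formula \eqref{eq:pm-dimension}: $\dim PM(H)=|E(H)|-|V(H)|+1-b(H)$. The idea is to relate $\dim F$, where $F=PM(G)\cap\{x:x(C)=1\}$, to the dimensions of $PM(G_1)$ and $PM(G_2)$, and to use that $x(C)\ge 1$ is a facet iff $\dim F=\dim PM(G)-1$, together with the fact (already available via \Cref{thm:basis-comb}(iii),(iv)) that when $C$ is separating, $\lin F=\lin(\zB_1\odot\zB_2)$ has dimension $\dim PM(G_1)+\dim PM(G_2)-|C|+1$.

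\textbf{Step 1: From (A) to (B).} Assume $C$ is separating and facet-inducing. Since $C$ is separating, both $C$-contractions $G_1,G_2$ are matching covered by definition, so I only need that each $G_i$ has exactly one brick. I'll use the edge-count bookkeeping: $|E(G_1)|+|E(G_2)|=|E(G)|+|C|$ and $|V(G_1)|+|V(G_2)|=|V(G)|+2$. Plugging the dimension formula into $\dim F=\dim PM(G_1)+\dim PM(G_2)-|C|+1$ (from \Cref{thm:basis-comb}) gives
\[
\dim F=\bigl(|E(G)|-|V(G)|+1\bigr)-b(G_1)-b(G_2)+1 .
\]
The facet condition says $\dim F=\dim PM(G)-1=|E(G)|-|V(G)|+1-b(G)-1=|E(G)|-|V(G)|-b(G)$, and since $G$ is a near-brick $b(G)=1$, so $\dim F=|E(G)|-|V(G)|-1$. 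Comparing the two expressions for $\dim F$ forces $b(G_1)+b(G_2)=2$. Because each $C$-contraction of a near-brick has at least one brick (a tight-cut decomposition of $G$ refines into those of $G_1,G_2$, whose bricks together are the bricks of $G$; with $b(G)=1$ this needs a short argument that neither $G_i$ is bipartite-and-brickless simultaneously with the other having two bricks — but $b_1,b_2\ge 1$ and $b_1+b_2=2$ already pins down $b_1=b_2=1$), we conclude both $G_i$ are near-bricks.

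\textbf{Step 2: From (B) to (A).} Assume both $G_i$ are matching-covered near-bricks. Matching-coveredness of the contractions is exactly the statement that $C$ is separating (using the ``equivalently'' clause in the definition of separating cut). For the facet claim, run the same computation in reverse: with $b(G_1)=b(G_2)=1$ the displayed formula gives $\dim F=|E(G)|-|V(G)|-1=\dim PM(G)-1$, so $F$ is a facet; and it is induced by $x(C)\ge 1$ provided this inequality is genuinely not an implicit equality, i.e. $C$ is not tight. Here I must rule out the degenerate possibility that $x(C)=1$ holds on all of $PM(G)$ — but if $C$ were tight then $F=PM(G)$ and $\dim F=\dim PM(G)$, contradicting the value just computed (which is one less); so $C$ is automatically non-tight and $x(C)\ge1$ properly induces the facet $F$.

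\textbf{Main obstacle.} The routine part is the arithmetic; the delicate point is justifying $b(G)=b(G_1)+b(G_2)$ and the lower bound $b(G_i)\ge 1$ cleanly — i.e. that tight cuts of $G$ interact well with the fixed cut $C$, so that a tight-cut decomposition of $G$ can be organized to first split along $C$ (when $C$ is tight) or, when $C$ is merely separating, that the brick counts still add. I expect to invoke the standard fact that the multiset of bricks/braces is decomposition-independent and that contracting a separating cut cannot decrease the brick count below $1$ for a near-brick (a brick or brace always contributes, and a bipartite contraction of a non-bipartite near-brick forces the other side to carry the unique brick). Pinning down this additivity rigorously — rather than the polytope dimension comparison, which is mechanical — is where the real care goes, and it is essentially the content deferred to the appendix.
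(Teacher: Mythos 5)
Your approach matches the paper's: compute $\dim F$ where $F=PM(G)\cap\{x:x(C)=1\}$ via \Cref{thm:basis-comb}(iii),(iv) and \eqref{eq:pm-dimension}, compare against $\dim PM(G)-1$, and read off $b(G_1)+b(G_2)=2$. That part of the plan is sound. Two issues, one small and one substantive.

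First, the arithmetic as written is off. Plugging $\dim PM(G_i)=|E_i|-|V_i|+1-b_i$ together with $|E_1|+|E_2|=|E|+|C|$ and $|V_1|+|V_2|=|V|+2$ into $\dim F=\dim PM(G_1)+\dim PM(G_2)+1-|C|$ gives $\dim F=|E|-|V|+1-b_1-b_2$, not $|E|-|V|+2-b_1-b_2$. With $\dim PM(G)-1=|E|-|V|-1$ this yields $b_1+b_2=2$, which is your stated conclusion, but the displayed formula would actually give $b_1+b_2=3$; two cancelling slips. Easily repaired, but as written neither direction of Step 1 nor Step 2 would check out numerically.

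The substantive gap is in how you close from $b_1+b_2=2$ to $b_1=b_2=1$. You assert $b_i\ge1$ and, in the parenthetical and in the ``Main obstacle'' paragraph, you propose to prove it via an additivity claim $b(G)=b(G_1)+b(G_2)$ coming from refining a tight-cut decomposition of $G$ along $C$. That route is not available here: $C$ is a separating cut, and the interesting case is precisely when $C$ is \emph{not} tight (a tight cut gives an implicit equality, hence cannot define a proper facet). For a separating non-tight cut, contraction is not a step of a tight-cut decomposition and brick counts are \emph{not} additive --- indeed the theorem itself produces the counterexample: $G$ a brick with $b(G)=1$ and $b_1+b_2=2$. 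So the ``standard fact'' you hope to invoke is false in exactly the situation at hand. The argument that is actually needed is much lighter and does not mention decompositions at all: if some $b_i=0$, then $G_i$ is bipartite, and summing the degree constraints $x(\delta(v))=1$ over one colour class of $G_i$ and subtracting the other expresses $x(C)$ as a linear combination of degree constraints, so $x(C)=1$ holds identically on $PM(G)$, i.e.\ $C$ is tight --- contradicting that $x(C)\ge1$ defines a facet. That is the missing ingredient, and it is what the paper supplies; you should replace the additivity heuristic with this bipartiteness-implies-tightness step.

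Two further small remarks. The paper also tracks, via a term $r(G_i)=|E_i|-|E_i'|$, the possibility that the contractions are not matching covered and derives $r(G_i)=0$ from the facet hypothesis; you sidestep this by invoking the separating hypothesis directly, which is legitimate given the statement assumes both conditions in (A), but makes your argument slightly less self-contained than the paper's. In Step 2 you correctly note that $C$ cannot be tight once the dimension drops, which mirrors the paper's closing check.
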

\section{General graphs}\label{sec:put-together}

As in Section \ref{subsec:non-bvn-detect}, consider a non-BvN brick $G$ for which the BvN algorithm fails. Theorem \ref{thm:find-robust} guarantees the existence of a separating cut $C$ both of whose contractions, call them $G_1$ and $G_2$, have exactly one brick that is not the Petersen brick.
Since neither $G_1$ nor $G_2$ contain the Petersen graph, we can construct lattice bases $\zB_1$ and $\zB_2$ for the corresponding graphs. Since both contractions do not contain Petersen bricks, $\lin(\zB_i)\cap \Z^{E_i} = \zL(G_i)$ and hence by \Cref{thm:basis-comb} the set $\zB:=\zB_1 \odot \zB_2$ will be an integral basis for its linear hull. However, all elements of $\zB$ satisfy $x^T \mathbf{1}_C=1$, so to make it a basis for $\zL(G)$ we need to add at least one perfect matching $M$ with $|M\cap C|>1$.
We use the following fact shown in \cite{camposlucchesi}.
\begin{theorem}\label{thm:char-three}
    In a non-BvN brick that is not Petersen, every separating cut has a perfect matching intersecting it three times.
    %If $G$ is a brick of characteristic three, then any separating cut has characteristic three.
\end{theorem}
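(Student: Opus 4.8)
The plan is to exploit the structure of a fractional vertex $x^*$ of $P(G)$ together with the near-brick hypothesis. Since $G$ is non-BvN and not Petersen, fix a fractional vertex $x^*$ of $P(G)$; as noted in \Cref{subsec:non-bvn-detect}, $x^*$ is half-integral and its support decomposes into weight-$1$ edges forming a matching $N$ together with vertex-disjoint odd cycles $O_1, \ldots, O_r$ ($r \geq 1$) of weight $1/2$. Let $C$ be an arbitrary separating cut; our goal is to produce a perfect matching $M$ with $|M \cap C| = 3$. The first step is to reduce to the case where $C$ is a \emph{tight} cut in one of the bricks of $G$: by \Cref{thm:fdi-charact}, since $C$ is separating in the near-brick $G$ and $x(C) \geq 1$ presumably fails to be an implicit equality (else $C$ would be tight and every matching meets it exactly once, contradicting that we seek one meeting it thrice — so one must first handle and exclude that degenerate sub-case), both $C$-contractions $G_1, G_2$ are matching-covered near-bricks, and $x(C) \geq 1$ defines a facet of $PM(G)$.

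Next I would use the facet-defining property of $x(C) \geq 1$ to manufacture a matching meeting $C$ more than once. Because $x(C) \geq 1$ defines a facet of $PM(G)$, there is a perfect matching $M_0$ of $G$ with $|M_0 \cap C| = 1$ lying on this facet, and — crucially — since the inequality is \emph{not} an implicit equality, there exists at least one perfect matching $M_1$ with $|M_1 \cap C| \geq 3$ (parity forces the intersection to be odd). This already gives \emph{a} matching meeting $C$ at least three times; the remaining work is to push its intersection number down to exactly $3$. The plan for this is an alternating-path exchange argument along $M_0 \triangle M_1$: the symmetric difference is a disjoint union of even alternating cycles, and switching $M_1$ along a well-chosen subcollection of these cycles changes $|M_1 \cap C|$ by an even amount per cycle (each alternating cycle crosses $C$ an even number of times), so by greedily switching cycles that decrease the crossing number we can decrease $|M_1 \cap C|$ in steps of $2$ until it hits $3$; it cannot overshoot to $1$ without at some intermediate stage being exactly $3$, because each switch changes the count by an even number and we stop as soon as we reach $3$. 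Alternatively, and perhaps more cleanly, one argues directly on $x^*$: the odd cycle $O_i$ whose vertex set is a shore of (part of) $C$ contributes $x^*(C) = 0$, and rounding $x^*$ along $O_i$ — replacing the $1/2$-weights on $O_i$ by an alternating $0/1$ pattern to get an integral point, which is a perfect matching since $G$ is a near-brick so the rounding stays inside $PM(G)$ after the single odd-cut violation is repaired — yields a matching that must re-enter the shore, forcing intersection exactly $3$ by a local count at the two "defect" edges of the rounding.

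The main obstacle, and where I would spend the most care, is making the rounding/exchange argument actually land on intersection number exactly $3$ rather than some larger odd number, and doing so \emph{uniformly} over all separating cuts $C$ rather than just the specific cut produced by \Cref{thm:find-robust}. The clean route is: (1) show every separating cut in a non-Petersen non-BvN brick is "crossed" by the support of some fractional vertex (or reduce to this via \Cref{thm:fdi-charact} and the facet description), (2) pick the fractional vertex so that one of its half-weight odd cycles $O$ has its vertex set strictly inside one shore of $C$ with exactly one edge of $O$ crossing... no — $O$ lies entirely in a shore so $x^*(\delta(O)) = 0$ means $C$ restricted there is empty, which is the wrong configuration. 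So the honest statement is that $C$ must separate the vertex set of some $O_i$, i.e. $O_i$ straddles $C$; then $x^*(C) \geq x^*(C \cap O_i) = |C \cap O_i|/2$, and $x^*(C) < 1$ forces $|C \cap O_i| = 0$ — contradiction unless $C$ also carries weight-$1$ edges, etc. Disentangling these cases to always land at a matching with $|M \cap C| = 3$ is the technical heart; I expect the paper handles it by first establishing that a non-Petersen non-BvN brick has a very specific "near-bipartite" structure (two disjoint odd cycles whose removal leaves a matchable bipartite-like graph, from the Schrijver characterization quoted after \eqref{eq:bip}), and then doing the exchange argument within that structure, where the odd-cycle geometry makes the count-exactly-$3$ conclusion forced rather than merely achievable.
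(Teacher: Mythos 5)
The paper does not actually prove \Cref{thm:char-three}; it cites it directly from \cite{camposlucchesi} and uses it as a black box. So there is no ``paper's own proof'' to compare against, and your proposal must be judged on its own.

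The central step of your proposal --- the alternating-cycle exchange --- has a genuine gap that you yourself flag but do not close. You observe correctly that switching $M_1$ along an alternating cycle $D$ of $M_0 \triangle M_1$ changes $|M_1\cap C|$ by the even quantity $|M_1\cap D\cap C| - |M_0\cap D\cap C|$, but you then assert that one can ``decrease $|M_1\cap C|$ in steps of $2$'' and that the process ``cannot overshoot to $1$ without at some intermediate stage being exactly $3$.'' That does not follow: a single cycle $D$ can satisfy $|M_1\cap D\cap C| - |M_0\cap D\cap C| = 4$ (or larger), in which case one switch drops the count from $5$ straight to $1$ and no sequencing or choice of subcollection of cycles avoids this, since cycle switches are all-or-nothing and a partial switch along a cycle is not a matching. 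The fallback ``rounding $x^*$'' route is abandoned mid-argument (you note the odd cycle $O_i$ lying in a shore gives $x^*(C\cap O_i)=0$, the wrong configuration). There is also a secondary misreading of \Cref{thm:fdi-charact}: that result is a biconditional between the conjunction ``separating and facet-defining'' and ``both contractions are near-bricks,'' so ``separating and not tight'' does not by itself yield ``facet-defining.'' In a brick a non-trivial separating cut need not have both contractions near-bricks, and this is exactly what the iterative refinement of \Cref{sub:find-fdi} has to repair. The actual proof of the exact-$3$ statement appears to require the structure theory of separating cuts in bricks developed by Campos and Lucchesi, not a purely local exchange argument.
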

Such a perfect matching $M^*$ can be found in polynomial time by considering all possible intersections $M^*\cap C$ of size three. Finally, we can use $M^*$ to increment the basis. We can now prove that such a set $\zB \cup \{\mathbf{1}_{M^*}\}$ will be a basis for $\zL(G)$.
\begin{theorem}
    Let $G=(V,E)$ be a matching covered graph and let $C$ be a non-tight separating cut such that both $C$-contractions are near-bricks whose brick is not the Petersen brick. Let $M^*$ be a perfect matching with $|M^*\cap C|=3$. Then $\zB \cup \{\mathbf{1}_{M^*}\}$ is an integral basis for $\zL(G)$ consisting of perfect matchings. 
\end{theorem}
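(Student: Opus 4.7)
The plan is to verify, in order, that $\zB \cup \{\mathbf{1}_{M^*}\}$ has the correct cardinality, is linearly independent, and that every integer vector in its linear hull is an integral combination of its elements.

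The cardinality and linear independence will be bookkeeping once the composition machinery is in place. By \Cref{thm:basis-comb}(iv), $|\zB| = |\zB_1| + |\zB_2| - |C|$; combining this with \eqref{eq:pm-lat-dimension} applied to the near-bricks $G_i$ and the identities $|E_1|+|E_2| = |E|+|C|$ and $|V_1|+|V_2| = |V|+2$, I obtain $|\zB| = |E|-|V|$, so $|\zB \cup \{\mathbf{1}_{M^*}\}| = |E|-|V|+1$, which matches $\dim(\zL(G))$ in the setting of interest, where $G$ is itself a non-Petersen brick. For linear independence, every element of $\zB$ satisfies $x(C) = 1$ by \Cref{thm:basis-comb}(iii) (each element is a composition $\mathbf{x}^i \odot \mathbf{y}^j$ of perfect matchings of the contractions, which individually intersect $C$ exactly once in the common edge), whereas $\mathbf{1}_{M^*}(C) = 3$, so $\mathbf{1}_{M^*} \notin \lin(\zB)$; combined with \Cref{thm:basis-comb}(i) this gives linear independence of the whole set, which by the dimension count spans $\lin(\mathcal{M})$.

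The substantive step, and the one I expect to be the main obstacle, is integrality. Given $y \in \lin(\zB \cup \{\mathbf{1}_{M^*}\}) \cap \Z^E$, I write $y = \alpha \mathbf{1}_{M^*} + z$ with $z \in \lin(\zB)$. If $\alpha \in \Z$, then $z = y - \alpha \mathbf{1}_{M^*}$ lies in $\lin(\zB) \cap \Z^E$, and \Cref{thm:basis-comb}(ii) expresses $z$ as an integral combination of $\zB$, so the entire difficulty concentrates on showing $\alpha \in \Z$. My approach is a parity argument driven by the odd cut $C$. Every element of $\zB$ contributes equally to $y(\delta(v))$ and $y(C)$ (namely the corresponding coefficient), while $\mathbf{1}_{M^*}$ contributes its coefficient $\alpha$ to $y(\delta(v))$ and $3\alpha$ to $y(C)$, so the decomposition immediately yields $y(C) - y(\delta(v)) = 2\alpha$ for any vertex $v$. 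Since $G$ has no Petersen bricks, \Cref{thm:lovasz} places $y$ in $\zL(G)$, so I may also write $y = \sum_M k_M \mathbf{1}_M$ with integer $k_M$; in these terms the same quantity equals $\sum_M k_M(|M \cap C| - 1)$, which is even because $|M \cap C|$ is odd for every perfect matching (as $C$ is an odd cut). Hence $2\alpha \in 2\Z$, so $\alpha \in \Z$. The crucial ingredient resolving the obstacle is thus \Cref{thm:lovasz} combined with the parity of $|M \cap C|$, and this is the one place where the non-Petersen hypothesis on the contractions' bricks (and implicitly on $G$ itself in the applied setting) is genuinely used.
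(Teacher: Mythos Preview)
Your argument is correct and follows essentially the same route as the paper: a dimension count to get that $\zB\cup\{\mathbf{1}_{M^*}\}$ is a linear basis, followed by the parity computation $y(C)-y(\delta(v))=2\alpha=\sum_M k_M(|M\cap C|-1)\in 2\Z$ to force $\alpha\in\Z$, and then an appeal to \Cref{thm:basis-comb}(ii) for the remainder. The only cosmetic differences are that the paper uses $\tfrac{2}{|V|}\,\mathbf{1}_E^{\!T}y$ in place of your $y(\delta(v))$ (these coincide on $\lin(\mathcal{M})$) and that the paper starts directly from $y\in\zL(G)$ rather than invoking \Cref{thm:lovasz} to pass from $\lin(\mathcal{M})\cap\Z^E$ to $\zL(G)$; neither affects the substance.
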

\begin{proof}
    By Theorem \ref{thm:fdi-charact}, such $C$ defines a facet of $PM(G)$. Hence, using part $(iii)$ of Theorem \ref{thm:basis-comb}, $|\zB|=\dim(\zL(G))-1$. Thus, adding $M^*$ to $\zB$ makes it a basis for the linear hull of $\zL(G)$, so it suffices to prove that it will be an integral basis.
    Indeed, consider any $y \in \zL(G)$. We also know it is in the linear hull of $\zB\cup\{\mathbf{1}_{M^*}\}$. Therefore, we can write it as follows \begin{equation}\label{eq:linear-comb}
        y = \sum_{M\in \mathcal{M}} \alpha_M \mathbf{1}_M = \sum_{z^i\in \zB} \beta_i z^i+\beta_* \mathbf{1}_{M^*},
    \end{equation} where all $\alpha_M$ are integers and the goal is to prove that $\beta_i$ are all integer.
    The first equality of (\ref{eq:linear-comb}) when multiplied by $\textbf{1}_{E}$ gives \begin{equation*}
        \frac{1}{n/2} \textbf{1}^T_{E} y = \frac{1}{n/2}\sum_{M\in \mathcal{M}} \alpha_M \textbf{1}^T_{E}\mathbf{1}_M=\sum_{M\in \mathcal{M}} \alpha_M \in \Z.
    \end{equation*} Similarly, multiplying it by $\textbf{1}_C$ gives \begin{equation*}
        \textbf{1}^T_{C} y = \sum_{M\in \mathcal{M}} \alpha_M \textbf{1}^T_{C}\mathbf{1}_M\equiv \sum_{M\in \mathcal{M}} \alpha_M \mod 2.
    \end{equation*} Applying the same idea to the last expression of equality (\ref{eq:linear-comb}) we obtain \begin{equation*}
        \frac{1}{n/2} \textbf{1}^T_{E} y =\sum_{\mathbf{z}^i \in \zB} \beta_i + \beta_*, \textbf{1}^T_{C} y = \sum_{\mathbf{z}^i \in \zB} \beta_i + 3\beta_*.
    \end{equation*} Therefore, $\beta_*=\frac{1}{2} (\textbf{1}^T_{C} y-\frac{1}{n/2} \textbf{1}^T_{E} y)$, which is integral by the above.
    Finally, consider $y-\beta_* \mathbf{1}_{M^*} \in \lin(\zB)$. Since we already showed that $\zB$ is an integral basis for its linear hull, all the remaining coefficients $\beta_i$ are also integer.
\end{proof}

\section{Proof of Theorem~\ref{thm:find-robust}}\label{sec:sep-find}

In this section, we prove \Cref{thm:find-robust}. To this end, consider a non-BvN brick $G$ with a half-fractional vertex $x^*$ of $PM(G)$ with $1/2$'s corresponding to some odd cycles of $G$. We can find an odd cut $C$ that has $x^*(C)=0$ and the goal is to find a separating cut $C^*$ for which $x(C^*) \geq 1$ defines a facet of $PM(G)$. A similar process is described in Section 14.1 of \cite{Lucchesi2024}. There are two stages of the process.

\subsection{Finding a separating cut}
If $C$ is separating in $G$, we are done. Otherwise, at least one of the $C$-contractions in $G$ is not matching covered.
Consider this contraction: call it $G_1$ and say
a shore $U$ is contracted into vertex $u$. If it is not matching covered then by Theorem \ref{thm:barrier-find} we can find a barrier $B$ whose vertex set spans an edge set $E'$ (the edges that are not matching covered in $G_1$).  
We may assume $B$ is a maximal barrier. Clearly, $B$ must contain the vertex $u$. Let $K_1, K_2, \ldots, K_t$ be the (odd) components of $G_1-B$ and let $F_i, i=1,\ldots,k$ be the odd cuts defined by these components. Then the following multiset identity holds: $\cup_{v_i\in B\backslash \{u\}}\delta(v_i) \cup C = \cup_{i=1}^t F_i\cup 2 E'$. 
\begin{claim}
    For all $i$, $PM(G)\cap \{x: x(F_i)=1\}\supseteq PM(G)\cap \{x: x(C)=1\}$.
\end{claim}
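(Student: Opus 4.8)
The plan is to prove the inclusion directly: take an arbitrary $x\in PM(G)$ with $x(C)=1$ and show $x(F_i)=1$ for every $i$. The only ingredient is the multiset identity recorded just above,
\[
\bigcup_{v_i\in B\setminus\{u\}}\delta(v_i)\ \cup\ C\ =\ \bigcup_{i=1}^{t}F_i\ \cup\ 2E',
\]
together with the fact that, since $B$ is a barrier of $G_1$ containing $u$, the number of odd components $K_1,\dots,K_t$ of $G_1-B$ equals $|B|$, so the left-hand multiset is exactly the $|B|-1$ stars $\delta(v_i)$ for $v_i\in B\setminus\{u\}$ together with the single cut $C$.

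First I would evaluate the functional $x$ on both sides of the identity, reading a multiset of edges as a sum of the corresponding variables counted with multiplicity. On the left, each $\delta(v_i)$ contributes $x(\delta(v_i))=1$ since $x$ satisfies the degree equations of $P(G)$, and $C$ contributes $x(C)=1$ by hypothesis, for a total of $(|B|-1)+1=|B|=t$. On the right, the value is $\sum_{i=1}^{t}x(F_i)+2\,x(E')$. Hence
\[
\sum_{i=1}^{t}x(F_i)+2\,x(E')=t .
\]

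Next I would use membership in $PM(G)$. Each $F_i=\delta_G(K_i)$ is an odd cut of $G$: its shores $K_i$ and $V\setminus K_i$ are both of odd size, as $|K_i|$ is odd and $|V|$ is even. So \eqref{eq:pm-general} gives $x(F_i)\ge1$, while $x(E')=\sum_{e\in E'}x_e\ge0$ by nonnegativity. Substituting these into the scalar identity forces $\sum_i x(F_i)=t$ and $x(E')=0$; and since the $t$ numbers $x(F_i)$ are each at least $1$ but sum to $t$, every one of them equals $1$. This is precisely $PM(G)\cap\{x:x(C)=1\}\subseteq PM(G)\cap\{x:x(F_i)=1\}$ for each $i$ (and if the left-hand face is empty, the claim is vacuous).

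I do not anticipate a real obstacle: the argument is just the one scalar computation above, with no use of LP duality. The care needed is purely in the bookkeeping — identifying an edge of the contraction $G_1$ incident to $v_i\in B\setminus\{u\}$ with the corresponding edge of $G$, so that $x(\delta_{G_1}(v_i))=x(\delta_G(v_i))=1$; noting that $C$ is not among the $F_i$ (because $u\in B$, whereas the $K_i$ lie in $G_1-B$); and checking the multiplicities in the identity, namely that edges with both endpoints in $B$ contribute $2$ to each side, edges from $B$ to some $K_i$ contribute $1$ to each side, and all remaining edges contribute $0$. None of this is substantial.
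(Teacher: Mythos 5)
Your proof is correct and is essentially the same as the paper's: both hinge on evaluating the multiset identity $\bigcup_{v_i\in B\setminus\{u\}}\delta(v_i)\cup C=\bigcup_i F_i\cup 2E'$, using the degree equations and the odd-cut inequalities $x(F_i)\ge 1$. The only cosmetic difference is that you apply the functional $x$ directly to an arbitrary point of the face $PM(G)\cap\{x(C)=1\}$, whereas the paper argues at the level of perfect matchings $M$ (vertices of that face); both routes immediately force $x(F_i)=1$ for every $i$.
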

\begin{cproof}
    For any perfect matching $M$ of $G$ we have \begin{equation*}
    \sum_{i=1}^{t-1}|M\cap \delta(v_i)|+|M\cap C| =\sum_{i=1}^t|M\cap F_i|+2|M\cap E'|. 
    \end{equation*} This reduces to:\begin{equation*}
    |M\cap C|-1 = \sum_{i=1}^t (|M\cap F_i|-1) + 2|M\cap E'|.
    \end{equation*}
    All terms on the right are nonnegative, so $|M\cap C|\geq |M\cap F_i|$ for all $i=1,2,\ldots, k$. Moreover, for the perfect matchings $M$ with $M\cap E' \neq \emptyset$, $|M\cap C|>|M\cap F_i|$.
\end{cproof}
\begin{claim}
    At least one of $F_i$ is non-trivial.
\end{claim}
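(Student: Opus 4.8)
# Proof Proposal for the Claim

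The plan is to argue by contradiction: suppose all the cuts $F_1, \ldots, F_t$ are trivial, i.e., each $F_i = \delta(w_i)$ for a single vertex $w_i \in K_i$ (so each component $K_i$ is a singleton $\{w_i\}$). I would then exploit the multiset identity $\bigcup_{v_i\in B\backslash \{u\}}\delta(v_i) \cup C = \bigcup_{i=1}^t F_i\cup 2 E'$ together with a counting argument on the cardinalities of the shores to derive a contradiction with the fact that $C$ is a \emph{non-trivial} odd cut in the original graph $G$, or with the maximality of the barrier $B$.

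First I would set up the cardinality bookkeeping. Since $C$ is a cut in $G$ separating the two shores, write one shore as $U$ (the one contracted to $u$ in $G_1$); in $G_1$ the barrier $B$ contains $u$, and $B \setminus \{u\}$ consists of $|B|-1$ genuine vertices of $G$ lying outside $U$. The odd components $K_1, \ldots, K_t$ of $G_1 - B$ are subsets of $V(G) \setminus (U \cup (B\setminus\{u\}))$, and since $B$ is a barrier $t = |B|$. If every $K_i$ were a singleton, then $V(G)$ would be partitioned (outside $U$) as $B\setminus\{u\}$ together with $t = |B|$ singleton vertices, plus whatever lies inside $U$. The key point: a trivial $F_i = \delta(w_i)$ means $w_i$ has all its $G$-neighbors among $B\setminus\{u\}$ and $U$; but the edges from $w_i$ into $U$ are precisely the edges of $C$ incident to $w_i$, and we chose $C$ so that $x^*(C) = 0$, which (by matching-coveredness of $G$ and the structure of $x^*$) forces strong restrictions on how $w_i$ attaches. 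I would combine this with the identity to show the non-triviality of $C$ is inherited by at least one $F_i$.

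The cleanest route is probably the following. Evaluate the multiset identity on the indicator vector $\mathbf{1}_{V(G)}$ — or more precisely, count edges on each side — noting that $C$ non-trivial means \emph{both} shores of $C$ have size $\ge 3$ (odd, non-trivial). In $G_1$, the shore containing $u$ is $\{u\}$ only if $U$ itself is a shore of a trivial cut, which it is not; so after contraction the cut $C$ becomes $\delta(u)$ in $G_1$, and $u \in B$. The other odd components $K_i$ and the edges of $E'$ carry the "mass" of the other shore of $C$. If all $F_i$ were trivial, then $G_1 - B$ would consist entirely of isolated vertices, so $G_1$ would have a very restricted structure — essentially $B$ together with an independent set, meaning $G_1$ is close to bipartite with parts $B$ and $V(G_1)\setminus B$. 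But then the original shore $V(G)\setminus U$ of $C$ would be covered by $B\setminus\{u\}$ plus isolated vertices $w_i$ each adjacent (in $G$) only to $B\setminus\{u\}$ and into $U$; tracing an edge of $C$ incident to some $w_i$ and using that $C$'s non-trivial shore has $\ge 3$ vertices, at least one $w_i$ must receive $\ge 2$ edges of $C$ or there must be an edge of $C$ not incident to any $B\setminus\{u\}$ vertex — either way the identity $|M\cap C| - 1 = \sum_i(|M\cap F_i|-1) + 2|M\cap E'|$ evaluated on a perfect matching $M$ with $|M\cap C| = 1$ forces all $|M \cap F_i| = 1$ and $M \cap E' = \emptyset$, so every $F_i$ is a tight (hence separating) cut in $G$; a trivial tight cut $\delta(w_i)$ is harmless, but the non-trivial shore of $C$ cannot then be "accounted for" — contradiction.

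The main obstacle I anticipate is making the last counting step fully rigorous: one has to carefully translate the multiset identity (which lives in the contracted graph $G_1$) back into a statement about $G$ and the shores of $C$, keeping track of which edges of $C$ correspond to edges of which $F_i$. In particular, one needs that not every edge of $C$ can be incident to a vertex of $B\setminus\{u\}$ — otherwise the non-trivial shore of $C$ collapses — and this is where the non-triviality of $C$ (both shores of size $\ge 3$) must be used decisively, perhaps together with the hypothesis $x^*(C) = 0$ forcing the cycle-shore of $C$ to be an odd cycle disjoint from the barrier structure. I would expect the honest proof to pick a perfect matching $M$ of $G$ with $|M \cap C| = 1$ (exists since $C$ is separating once we pass to this stage — or, before that, exists because $x^*$ witnesses it in a suitable sense), apply the reduced identity, conclude each $F_i$ is tight in $G$, and then observe that a decomposition of the non-trivial cut $C$ into \emph{only} trivial tight cuts plus doubled edges is impossible by comparing, say, $|M' \cap C|$ for a perfect matching $M'$ chosen to cross $C$ three times (which exists by Theorem~\ref{thm:char-three} applied to $G$, since $G$ is a non-BvN brick that is not Petersen).
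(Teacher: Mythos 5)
Your proposal does not land on the paper's argument, and the route you sketch has genuine gaps that the paper avoids entirely. The paper's proof is a short, direct counting argument that exploits the half-integral structure of the fractional vertex $x^*$: since $x^*(C)=0$, the contracted vertex $u$ is incident to no edge in $\supp(x^*)$, so $\supp(x^*)$ restricted to $G_1$ covers every vertex of $V(G_1)\setminus\{u\}$ and consists of vertex-disjoint edges of weight $1$ and odd cycles of weight $1/2$. If all $F_i$ were trivial, then $V(G_1)\setminus B$ would be an independent set of size $t=|B|$. An edge-component of $\supp(x^*)$ covers at least as many vertices of $B\setminus\{u\}$ as of $V(G_1)\setminus B$; an odd-cycle component covers \emph{strictly more}, because the cycle cannot have two consecutive vertices in the independent set $V(G_1)\setminus B$ and is odd (so it must use an edge of $E'$). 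Summing over components yields $|B\setminus\{u\}|\ge|V(G_1)\setminus B|=|B|$, a contradiction. Your proposal gestures at the fact that $x^*(C)=0$ ``forces strong restrictions,'' but never identifies this component-by-component count as the mechanism.

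Beyond missing the idea, two of the facts you lean on are not available at this stage. First, $C$ is \emph{not} separating here — that is exactly the hypothesis of this subsection — so you cannot invoke \Cref{thm:char-three} to produce a perfect matching with $|M'\cap C|=3$ (that theorem is stated for separating cuts). Second, $x^*$ is a fractional vertex of the bipartite relaxation $P(G)$, not of $PM(G)$, so it is not a convex combination of perfect matchings and does not ``witness'' the existence of a perfect matching with $|M\cap C|=1$ in any useful sense; in fact $x^*(C)=0<1$, so $x^*$ violates the odd-cut inequality for $C$. Your plan to show each $F_i$ tight and then contradict via a thrice-crossing matching therefore cannot get off the ground. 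You also acknowledge the counting step is not rigorous; the paper's version makes it rigorous precisely by counting vertices covered by $\supp(x^*)$ rather than trying to count edges via the multiset identity.
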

\begin{cproof}
    Suppose all of $F_i$ were trivial. Then the all components of $\supp(x^*)$ in $G_1$ contain equal number of vertices in $B$ (for edges) as in $V(G_1)-B$ or have more in $B$ (for odd cycles that must inevitably use $E'$). But this is impossible because $u$ is not incident to any edges in $\supp(x^*)$, making $V(G_1)-B$ strictly larger than $B\backslash \{u\}$.
\end{cproof}
\begin{claim}
    For any non-trivial $F_i$, we have $\phi(F_i)<\phi(C)$ where $\phi(S):=\min \{|M\cap S|: e\in M, M\textmd{ perfect matching of } G\}$ defined for all non-trivial odd cuts of $G$.
\end{claim}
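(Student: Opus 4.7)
The plan is to re-use the multiset identity established in the proof of the first claim of this section: for every perfect matching $M$ of $G$ one has
\[
|M\cap C|-1 \;=\; \sum_{j=1}^{t}\bigl(|M\cap F_j|-1\bigr) + 2\,|M\cap E'|.
\]
Since each $F_j$ is an odd cut of $G$, every term $|M\cap F_j|-1$ on the right-hand side is a non-negative even integer. Fixing any index $i$ and discarding the non-negative terms for $j\ne i$ yields the pointwise bound
\[
|M\cap F_i| \;\leq\; |M\cap C|-2\,|M\cap E'|,
\]
which is the key algebraic inequality driving everything.

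I would then restrict attention to perfect matchings $M$ containing the distinguished edge $e$ used in the definition of $\phi$. In the current iteration $C$ is non-separating and $G_1$ is not matching covered, and the maximal barrier $B$ supplied by \Cref{thm:barrier-find} spans exactly the edges of $G_1$ that belong to no perfect matching of $G_1$; these correspond in $G$ to the edges lying in no perfect matching that crosses $C$ exactly once. Since $e$ was chosen precisely as such a witness, $e\in E'$. Consequently, for any perfect matching $M$ with $e\in M$ one has $|M\cap E'|\geq 1$, and the pointwise bound sharpens to $|M\cap F_i|\leq|M\cap C|-2$.

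To finish, take $M^\star$ to be a perfect matching with $e\in M^\star$ minimizing $|M^\star\cap C|$, so that $|M^\star\cap C|=\phi(C)$; then
\[
\phi(F_i)\;\leq\;|M^\star\cap F_i|\;\leq\;\phi(C)-2\;<\;\phi(C).
\]
The main obstacle to watch for is the identification $e\in E'$, which hinges on the maximality of the barrier $B$ (via \Cref{thm:barrier-find}) and on the bijection between perfect matchings of $G_1$ and perfect matchings of $G$ that cross $C$ exactly once; once this is in place, the claim follows from a single application of the identity. Notice that non-triviality of $F_i$ is never actually used in this argument — the algebraic bound holds for every index $i$, and the non-triviality established in the previous claim is needed only so that $F_i$ is available as the cut replacing $C$ in the next iteration of the algorithm.
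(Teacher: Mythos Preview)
Your argument is correct and is exactly what the paper means by ``Follows from Claim~1'': you re-use the multiset identity to get $|M\cap F_i|\le|M\cap C|-2|M\cap E'|$, observe that $e\in E'$ forces $|M\cap E'|\ge1$ whenever $e\in M$, and conclude $\phi(F_i)\le\phi(C)-2$. One small caveat: the sentence asserting that the edges of $E'$ ``correspond in $G$ to the edges lying in no perfect matching that crosses $C$ exactly once'' overstates matters --- only one inclusion holds in general (if $G_2$ is also not matching covered, a witness edge for $C$ being non-separating can lie outside $E'$), so your deduction ``$e$ was such a witness, hence $e\in E'$'' is not quite valid as written. This is harmless for the proof: simply take $e$ to be any edge spanned by the barrier $B$, which puts $e\in E'$ tautologically, and the rest of your argument goes through verbatim.
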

\begin{cproof}
    Follows from Claim 1.
\end{cproof}
Thus, after at most $|V|/2$ iterations, we will arrive at a cut whose contractions are matching covered in $G$, i.e., we get a separating cut.
% Moreover, we have \begin{equation*}
%     \sum_{i=1}^{t-1} x^*(\delta(v_i)+x^*(C)=\sum_{i=1}^t x^*(F_i) + 2 x^*(\zS),
% \end{equation*} which means\begin{equation*}
%     t-1=\sum_{i=1}^t x(F_i)+2x^*(\zS).
% \end{equation*} Hence, at least one of $F_i$ is also violated by $x^*$. 
\subsection{Finding a facet defining cut}\label{sub:find-fdi}
Let $C$ be a separating cut.
If both $C$-contractions are near-bricks, then we are done. Suppose one of the $C$-contractions contains more than one brick, say it is $G'$ obtained by shrinking the shore $U$ of $C$ into a vertex $u$. Then $G'$ has a barrier or a $2$-separation cut.
% contain one brick. Then, by dimension counting, we have $\dim(PM(G)) = |E|-|V|+2-b = |E|-|V|+1$. On the other hand, for the two $C$ contractions we will have \begin{align*}
%     \dim(PM(G_1))+\dim(PM(G_2)) &= (|E_1|-|V_1|+1) + (|E_2|-|V_2|+1)  =\\&= (|E|+|C|)-(|V|+2)+2= \dim (PM(G))+|C|-1.
% \end{align*} 
% As seen in Section \ref{sec:basis-comb}, the vectors in $PM(G_1)$ and $PM(G_2)$ can be merged to create a set of $\dim(PM(G_1))+\dim(PM(G_2))-|C|$ linearly independent vectors of $PM(G)$, all of which have $x(C)=1$.
% We see that the face $PM(G)\cap \{x: x(C)=1\}$ has a dimension at most one smaller than $\dim(PM(G))$, which means that $C$ defines a facet of $PM(G)$.

% We will show how to obtain a robust separating cut $F$ that \textit{dominates} $C$ i.e. $|PM(G)\cap \{x:x(C)=1\}|\subsetneq |PM(G)\cap \{x:x(F)=1\}|$, or verify that both $C$-contractions of $G$ are \emph{near-bricks}.

% We proceed as follows. Consider one of the two $C$-contractions. Say it is $G'$ obtained by shrinking a shore $U$ of $C$ into a single tight vertex $u$.
% Since $G'$ is not a brick, we can identify a tight cut induced by a barrier or a two-separation. Consider two cases.

\underline{Case $1$:} $G'$ has a $2$-separation $\{v,w\}$, then clearly one of the vertices has to be $u$, say $w=u$. Let $K_1$ and $K_2$ be two odd components of $G'-u-v$. Let $F_1$ and $F_2$ be two odd cuts induced by $\{u\}\cup F_1$ and $\{u\}\cup F_2$, respectively. Then $\delta(v)\cup C=F_1\cup F_2$. In particular, it implies that for any perfect matching $M$ of $D$, we have \begin{equation*}
    |M\cap \delta(v)|+|M\cap C| = |M\cap F_1|+|M\cap F_2|,
\end{equation*} which reduces to \begin{equation*}
    |M\cap C|-1 = (|M\cap F_1|-1)+(|M\cap F_2|-1).
\end{equation*}
Notice that both terms on the right are nonnegative, hence $|M\cap C|\geq |M\cap F_i|$ for all perfect matchings $M$ of $D$ and $i=1,2$. Furthermore, for any perfect matching $M'$ with $|M'\cap C|= 3$ (whose existence is guaranteed by Theorem \ref{thm:char-three}), we have $|M'\cap C_i|=1$ and $|M'\cap F_{3-i}|=3$ for some $i=1,2$. This means that at least one of $F_1, F_2$ will induce a face of $PM(G)$ that strictly includes the face induced by $C$. 

\underline{Case $2$:} $G'$ has a barrier $B$. We may assume $B$ is a maximal barrier. Clearly, $B$ must contain the vertex $u$. Let $K_1, K_2, \ldots, K_t$ be the (odd) components of $G'-B$ and let $F_i, i=1,\ldots,k$ be the odd cuts defined by these components. Then $\cup_{v_i\in B\backslash \{u\}}\delta(v_i) \cup C = \cup_{i=1}^t F_i$. Similarly to the previous case, for any perfect matching $M$ of $G$ we have the following. \begin{equation*}
    \sum_{i=1}^{t-1}|M\cap \delta(v_i)|+|M\cap C| =\sum_{i=1}^t|M\cap F_i|. 
\end{equation*} This reduces to\begin{equation*}
    |M\cap C|-1 = \sum_{i=1}^t (|M\cap F_i|-1).
\end{equation*}
Notice that all terms on the right are nonnegative. Furthermore, if $M'$ is a perfect matching with $|M'\cap C|=3$ then $|M\cap F_i|=3$ for exactly one of $F_i$. This particular $F_i$ will define a face of $PM(G)$ that is strictly larger than the one defined by $C$.

Finally, the process of finding a stronger odd cut is bounded by the dimension of $PM(G)$. Indeed, adding more vertices to the already existing face of $PM(G)$ means that its dimension increases by at least one. Since the dimension of $PM(G)$ is upper bounded by the number of edges, any sequence of cut updates will take at most $|E|$ steps.

\subsection{Avoiding Petersen bricks after separating cuts}

In the above argument, we require that for the two $C$-contractions of $G$ there is an integral basis for $\lin(G)$ consisting of perfect matchings. In order to guarantee such a property, Theorem \ref{thm:lovasz} requires that neither of the $C$-contractions contains a Petersen brick.
We do this by altering the cut $C$. This procedure was also described in \cite{abdi2025integralbasepm}.

Indeed, let $C$ be a separating cut with shores $X$ and $V\backslash X$ that defines a facet of $PM(G)$. Suppose contracting $X$ into a single vertex $x$ results in a near-brick with a Petersen brick. We may assume that this contraction equals the Petersen graph up to edge multiplicities: by following the steps in \Cref{sub:find-fdi}, $C$ can be adjusted to an equivalent cut whose contraction on a specific side does not have tight cuts. Label the vertices of the $X$-contraction as $x,y,z,v,w$ in the outer $5$-cycle and $x',y',z',v',w'$ as the inner vertices. Then, consider the cut $\overline{C}$ with a shore $\overline{X}:=V\backslash X\cup \{y,y',w,w'\}$. 
\begin{claim}
    $\overline{C}$ is a separating cut whose contractions are near-bricks.
\end{claim}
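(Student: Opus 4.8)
The plan is to verify the two assertions separately: first that $\overline{C}$ is separating, and then that both $\overline{C}$-contractions are near-bricks. Throughout, I will use the concrete structure of the Petersen graph on $\{x,y,z,v,w,x',y',z',v',w'\}$, recalling that its $5$ perfect matchings away from the ``star at a vertex'' come in one rotation class and the remaining perfect matchings are obtained by including one spoke together with a matching of the two $5$-cycles on the remaining vertices. The key structural observation is that $\overline{X} = (V\setminus X)\cup\{y,y',w,w'\}$ is obtained from $V\setminus X$ by moving across exactly the two ``antipodal pairs'' $\{y,y'\}$ and $\{w,w'\}$; since $x$ is adjacent (in the contracted graph) to $y$, $w$, and $x'$, and the edges among $\{y,y',w,w'\}$ and back to $x$ can be read off directly, the cut $\overline{C}$ is a small, explicit modification of $C$. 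I would first compute $\overline{C}$ as an edge set in terms of $C$ and the Petersen edges incident to $\{y,y',w,w'\}$, so that the rest of the argument has something concrete to work with.

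For \emph{separating}: by the remark after the definition of separating cuts, it suffices to show that every edge of $G$ lies in some perfect matching $M$ with $|M\cap\overline{C}|=1$. Edges with both endpoints inside $V\setminus X$ or both inside $X\setminus\{y,y',w,w'\}$ are handled by restricting, on the relevant side, to a perfect matching of the corresponding $\overline{C}$-contraction and gluing; so the real content is to exhibit, for each edge $e$ of the Petersen contraction, a perfect matching of that contraction hitting $\overline{C}$ exactly once and using $e$ — and symmetrically on the other side. Because the Petersen graph is edge-transitive and matching covered, this is a finite check: for each of the (few, up to symmetry) edge-orbits, write down a perfect matching of the Petersen graph and count its intersection with the edge-set separating $\{y,y',w,w'\}$ from the rest. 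I expect each orbit to be realizable, using the flexibility that the two $5$-cycles of Petersen can be matched in two ways once a spoke is chosen.

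For \emph{near-brick}: I would invoke Theorem~\ref{thm:fdi-charact}. If I can show that $x(\overline{C})\ge 1$ defines a facet of $PM(G)$ — equivalently that $\overline{C}$ is an equivalent cut to $C$ in the sense that $PM(G)\cap\{x(\overline{C})=1\}=PM(G)\cap\{x(C)=1\}$, which is what the cited procedure in \cite{abdi2025integralbasepm} engineers — then both $\overline{C}$-contractions are matching-covered near-bricks automatically. So the crux reduces to the face-equality $PM(G)\cap\{x(C)=1\}=PM(G)\cap\{x(\overline C)=1\}$: concretely, every perfect matching $M$ of $G$ with $|M\cap C|=1$ must also satisfy $|M\cap\overline C|=1$, and vice versa. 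This is again a Petersen-local computation: on the $X$-side, $M$ restricts to a perfect matching of the Petersen contraction using exactly one edge of $C$ (i.e.\ exactly one of the edges from $x$), and one checks that each such matching of Petersen uses exactly one edge of the set separating $\{y,y',w,w'\}$; the converse direction is identical with the roles of the two shores of Petersen swapped. The remaining side-condition — that the Petersen contraction has been first normalized so it has no tight cuts, hence genuinely equals Petersen up to parallel edges — was arranged by the paragraph preceding the claim, so parallel edges only multiply the count and do not affect any intersection parity.

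The main obstacle I anticipate is purely bookkeeping: correctly identifying which Petersen edges land in $\overline C$ versus $C$ and making sure the finitely many perfect matchings of Petersen I write down genuinely cover all edge-orbits with the right intersection counts. There is also a mild subtlety in that the claim is stated for ``the $X$-contraction equals Petersen up to edge multiplicities,'' so I should check that adding parallel copies of an edge never changes $|M\cap C|$ or $|M\cap\overline C|$ (it does not, since a perfect matching picks at most one copy) and never changes matching-coveredness or the near-brick count (since contracting a parallel class back is harmless). Once the edge-set computation is pinned down, both the separating property and the facet/near-brick property follow from the same short list of explicit Petersen perfect matchings, so I would organize the proof around producing that list once and reusing it.
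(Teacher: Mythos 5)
Your proposal hinges on the claim that $PM(G)\cap\{x\colon x(C)=1\}=PM(G)\cap\{x\colon x(\overline C)=1\}$, but this face-equality is false. On the Petersen contraction, take the perfect matching $\{xx',\,yz,\,wv,\,z'w',\,y'v'\}$: exactly one edge ($xx'$) is at the contracted vertex $x$, so any perfect matching of $G$ restricting to it satisfies $|M\cap C|=1$; yet all five edges of this matching cross $\delta(\{x,y,y',w,w'\})$, so $|M\cap\overline C|=5$. Thus the cut $\overline C$ is \emph{not} equivalent to $C$ as a face of $PM(G)$, and your route to ``facet-defining'' collapses. Moreover, even granting facet-definingness, the way you deduce near-brick-ness from \Cref{thm:fdi-charact} is circular: that theorem characterizes ``separating and facet-defining'' \emph{as} ``both contractions are matching-covered near-bricks,'' so invoking it requires independently establishing one side of the equivalence — which is precisely the content of the claim.

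The paper's proof avoids all of this. It verifies directly that the Petersen-side $\overline C$-contraction (Petersen with $\{x,y,y',w,w'\}$ identified) is a $5$-wheel, hence a matching-covered near-brick. For the other $\overline C$-contraction $G_2'$, it builds $\dim(G_2)+1$ linearly independent perfect matchings (by extending a lattice basis for the original $C$-contraction $G_2$ and adding one perfect matching that meets $C$ three times and $\overline C$ once), observes these cover every edge of $G_2'$ (giving the separating property), and then uses the dimension formula $\dim(PM(H))=|E(H)|-|V(H)|+1-b(H)$ to force $b(G_2')\le 1$, hence $b(G_2')=1$. Your plan for the separating part (edge-by-edge checking plus gluing) is workable in spirit but not developed; the near-brick part as proposed does not go through.
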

\begin{cproof}
    It is straightforward to check that the graph obtained from Petersen by contracting the vertices corresponding to $\{x,y,y',w,w'\}$ is a matching covered near-brick (it is just a $5$-wheel). Thus, it suffices to check the same for the graph obtained from $G$ by shrinking $V\backslash\overline{X}$.

    To prove it, we construct a large set of linearly independent perfect matchings of $G_2'$. Consider the basis $B_2$ for $G_2$ and extend each of its elements to a perfect matching of $G_2'$. Furthermore, add a perfect matching of $G_2'$ that intersects $C$ three times and intersects $C'$ once. In total, we obtain $\dim(G_2)+1$ linearly independent perfect matchings, which cover each edge of $G_2'$, so $C'$ is indeed separating. Moreover, $\dim(G_2')\geq \dim(G_2)+1$. On the other hand, $G_2$ is a near-brick, so $\dim(G_2) = |E_2|-|V_2|+1$ and so $|E_2'|-|V_2'|+1-b(G_2') = |E_2|+5-|V_2|-4+1-b(G_2') \geq |E_2|-|V_2|+2$, meaning $b(G_2')\leq 1$. Clearly, it cannot be less than $1$ as $G_2'$ is non-bipartite, so the only possibility is $b(G_2')=1$, as wanted.
\end{cproof}

\section{Acknowledgments}
I am grateful to Ahmad Abdi, G\'erard Cornuejols, and Siyue Liu for their invaluable discussions and encouragement throughout this project.

\bibliographystyle{splncs04}
\bibliography{bibliography}
%\printbibliography

\appendix
\section{Connection to ear decompositions}

\subsection{Classical approach}\label{sec:classical}

We begin by describing the approach of Carvalho, Lucchesi, and Murty in \cite{dimension}. 
One way of constructing a basis for bricks is through \emph{ear decompositions}. Given a matching covered graph $G$, a \emph{single ear} is a path $P=v_1 v_2 \ldots v_k$ with $k$ even where $v_i$ has degree two for every $i = 2, 3, \ldots, k-1$. A \emph{double ear} consists of two vertex-disjoint single ears. Finally, an \emph{ear} $R$ is either a single or a double ear; $R$ is \emph{removable} if $G-E(R)$ is matching covered. It is known (\cite{lovaszplummer}) that each matching covered graph has an \emph{ear decomposition}, which is a sequence $K_2 = G_0\subset G_2 \subset \ldots \subset G_d = G$ where $G_i$ is obtained from $G_{i+1}$ by removing an ear. It is important to distinguish between single and double ears, as those affect the dimension of the perfect matching lattice differently. In bricks, all ears consist of a single edge as otherwise its endpoints would create a $2$-separation cut.

The basis construction is based on finding an ear decomposition where each ear $R_{i+1}:=G_{i+1}-G_i$ is removable in $G_{i+1}$. Once this is done, we may inductively construct a large set of linearly independent perfect matchings of $G$ as follows. At step $i$, let $\zB$ be the current set of matchings (for $G_i$), consider $R_{i+1}$. We may assume that $R_{i+1}$ is either an edge or a pair of edges since any ear $v_1,v_2, \ldots, v_k$ for even $k$ induces a tight cut with a shore $\{v_1,v_2, \ldots, v_{k-1}\}$. For an ear $R_{i+1}$, consider the set $\zB':=\{M\cup X: M \in \zB\} \cup \{M_Y\}$ where $M_Y$ is any perfect matching using an edge of $Y$ (notice that this forces it to contain all of $Y$). Each element of this set is a perfect matching of $G_{i+1}$ and the characteristic vectors of these matchings form an integral basis for its linear hull. The last statement is due to $M_Y$ being the only matching containing an edge of $Y$ and by the inductive hypothesis on $\zB$.

If the number of double ears is appropriate, the set $\zB$ obtained in the end will have the same dimension as $\zL(G)$. To ensure this, it can be shown that every non-Petersen brick has an ear decomposition with exactly one ear. We use the following theorem due to Carvalho, Lucchesi, and Murty:
\begin{theorem}[Theorem $3.3$ in \cite{conj-lovasz}]\label{thm:brick-invariant-edge}
    Every brick $G$ that is not Petersen or $K_4$ or $\overline{C_6}$ has an edge $e$ such that $G-e$ is a matching covered near-brick that does not contain a Petersen brick. 
\end{theorem}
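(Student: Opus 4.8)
Write $P$ for the Petersen graph, call an edge $e$ of a brick $G$ \emph{removable} if $G-e$ is matching covered, and \emph{$b$-invariant} if in addition $G-e$ is a near-brick (has exactly one brick). The claim splits into two parts: (A) every brick outside $\{K_4,\overline{C_6},P\}$ has a $b$-invariant edge, and (B) whenever a brick has a $b$-invariant edge, it has one whose unique brick is not $P$. Part (A) is the theorem of de Carvalho, Lucchesi, and Murty resolving Lov\'asz's conjecture, and I would reconstruct it along their lines; part (B) is a refinement layered on top.

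For (A), the starting point is the classical fact of Lov\'asz that every brick other than $K_4$ and $\overline{C_6}$ has a removable edge — and, more precisely, that the number of removable edges grows with $|E|-|V|$, slack that will also serve in (B). Given a removable edge $e=uv$, the task is to bound the number of bricks of $G-e$ by one, and the governing dichotomy is whether $G$ is \emph{solid}, i.e.\ has no non-trivial separating cut. If $G$ is solid, one argues directly: every solid brick other than $K_4,\overline{C_6}$ has a \emph{thin} removable edge — one for which the tight cut decomposition of $G-e$ is as degenerate as possible — and solidity forces every thin removable edge to be $b$-invariant, since any non-trivial brick produced by contracting a shore of $G-e$ would pull back to a non-trivial separating cut of $G$. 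If $G$ is not solid, it has a non-trivial separating cut $C$; the two $C$-contractions are matching covered on fewer vertices, so by induction each contains a $b$-invariant edge, one shows this lifts to a $b$-invariant edge of $G$, and reassembling the tight cut decompositions of the two contractions along $C$ certifies that $G-e$ is matching covered with a single brick. The Petersen graph is itself non-solid but is a base case where this lift cannot be carried out, which is exactly why it is excluded. (This non-solid reduction runs parallel to the separating-cut decomposition in the body of this paper, and \Cref{thm:fdi-charact} is closely related to the polyhedral side of the solid/non-solid distinction.)

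For (B), suppose toward a contradiction that for every $b$-invariant edge $e$ of $G$ the unique brick of $G-e$ is $P$. Then $G$ is obtained by adding a single edge to a near-brick with Petersen brick, and, by the abundance of $b$-invariant edges, in many different ways. A near-brick with Petersen brick has a tight cut decomposition consisting of $P$ together with braces, and adding back one edge so as to make $G$ a brick constrains $G$ so tightly — using the small order and high symmetry of $P$ — that $G$ must coincide with $P$ or with one of the excluded small bricks, contradicting $G\ne P$; the finitely many bricks of small excess $|E|-|V|$, where the count gives no slack, are checked by hand. One also confirms that $K_4$, $\overline{C_6}$, and $P$ are genuinely exceptional, having no $b$-invariant edge at all, so the hypothesis of the theorem cannot be weakened.

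The real obstacle is the brick-count control in part (A): deleting a single removable edge almost never keeps the brick count at one, and only the solid/non-solid split together with the inductive lift in the non-solid case makes it go through — a faithful proof of (A) must in essence redo the two-part argument of de Carvalho, Lucchesi, and Murty. The removable-edge counting and the Petersen refinement in (B), by contrast, are comparatively routine once the structural control of (A) is in hand.
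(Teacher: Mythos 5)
The paper does not prove this result at all; it is imported verbatim as Theorem~3.3 of de Carvalho, Lucchesi, and Murty \cite{conj-lovasz}, the paper that resolves Lov\'asz's conjecture on $b$-invariant edges. So there is no in-paper proof to compare your attempt against, and the appropriate standard here is a correct citation rather than a reproof. Your sketch gets the provenance right and correctly identifies the governing dichotomy in the CLM argument (solid vs.\ non-solid bricks, thin removable edges, inductive lifting across a non-trivial separating cut). As a roadmap it is faithful.

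That said, read as a \emph{proof}, the sketch has real gaps, several of which you candidly flag yourself. The two load-bearing claims in part (A) --- that every solid brick other than $K_4,\overline{C_6}$ has a thin removable edge which is automatically $b$-invariant, and that $b$-invariant edges of the $C$-contractions lift to a $b$-invariant edge of $G$ in the non-solid case --- are not small lemmas but the actual technical content of \cite{conj-lovasz}; in particular the lifting step requires delicate bookkeeping of how tight cuts of $G-e$ interact with the separating cut $C$, and it is not obvious from what you wrote. Part (B) is the weakest point: the assertion that if every $b$-invariant edge of $G$ leaves a Petersen brick then $G$ itself must be Petersen or a small brick is stated as a counting/rigidity heuristic, but no actual argument is given, and the Petersen-avoidance refinement in the literature proceeds quite differently (via the structure of near-bricks with a Petersen brick and removable doubletons, not a global edge-count contradiction). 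In short: treat your write-up as a plausible summary of CLM's strategy accompanying the citation, not as an independent proof; to actually prove Theorem~\ref{thm:brick-invariant-edge} you would need to reproduce their argument in full.
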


This theorem implies the existence of an ear decomposition with exactly one double ear as follows: let $G$ be a non-Petersen brick in question. If it equals $K_4$, the basis is obtained by considering all three of its perfect matchings. If $G$ is not Petersen or $K_4$, consider the edge $e$ found in \cref{thm:brick-invariant-edge}. Then $\dim(PM(G-e)) = |E\backslash \{e\}|-|V|+1-b(G-e) = \dim(PM(G))-1$ since $b(G-e)=1$. Moreover, since $G-e$ does not contain the Petersen graph as a brick, $G-e$ still has a set of perfect matchings whose incidence vectors form an integral basis for the linear hull of perfect matchings of $G-e$. Therefore, augmenting it with any perfect matching using $e$ will create a linearly independent set that forms an integral basis for $G$. To proceed, it suffices to contract all tight cuts if necessary and consider the only brick in the list. 

\subsection{Bipartite BvN graphs}

First, we establish a connection between the polyhedral description of the polytope and the combinatorial properties of its edges.
\begin{theorem}\label{thm:facets-bip}
    Let $G$ be a bipartite matching covered graph that is not a $4$-cycle $C_4$. The following are equivalent: \begin{itemize}
        \item[(i):] the inequality $x_e\geq 0$ defines a facet of the perfect matching polytope $PM(G)$ for an edge $e$,
        \item[(ii):] $G-e$ is a matching covered bipartite graph. 
    \end{itemize}
\end{theorem}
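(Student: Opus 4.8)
The plan is to prove both directions using the dimension formula for perfect matching polytopes (equation \eqref{eq:pm-dimension}) together with the fact that, in a bipartite matching covered graph, $PM(G) = P(G)$ is described purely by the degree equalities and nonnegativity. Since $G$ is a brace-type object (bipartite, hence BvN), a facet of $PM(G)$ is either induced by a nonnegativity constraint $x_f \geq 0$ or is a coordinate-face; there are no odd-cut facets. So the content of the theorem is really: deleting $e$ drops the dimension by exactly one \emph{and} keeps the graph matching covered, precisely when $x_e \geq 0$ is facet-inducing.

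For the direction (ii) $\Rightarrow$ (i): assume $G - e$ is matching covered and bipartite. Bipartite matching covered graphs have no bricks, so $b(G) = b(G-e) = 0$, and by \eqref{eq:pm-dimension} we get $\dim PM(G-e) = |E| - 1 - |V| + 1 = \dim PM(G) - 1$. The face $PM(G) \cap \{x_e = 0\}$ of $PM(G)$ is exactly $PM(G-e)$ (a point of $P(G)$ with $x_e = 0$ is a perfect matching of $G$ avoiding $e$, i.e.\ a perfect matching of $G-e$, and conversely), so this face has dimension $\dim PM(G) - 1$, which means $x_e \geq 0$ is facet-inducing. I would need a small remark that $PM(G-e)$ is nonempty, which follows from $G-e$ being matching covered (assuming $G - e$ still has the same vertex set; since $G$ is matching covered and not $C_4$, deleting an edge from a matching covered bipartite graph that remains matching covered cannot isolate a vertex).

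For the direction (i) $\Rightarrow$ (ii): suppose $x_e \geq 0$ defines a facet. First, $e$ lies in some perfect matching (as $G$ is matching covered), so $G - e$ has the full vertex set and $PM(G-e) = PM(G)\cap\{x_e = 0\}$ is a facet, hence has dimension $\dim PM(G) - 1 = |E| - |V| - b(G)$. On the other hand, if $G-e$ were \emph{not} matching covered, one could delete all non-allowed edges to get a matching covered graph $G''$ with strictly fewer edges but the same perfect matchings, so $\dim PM(G-e) = \dim PM(G'') = |E''| - |V| + 1 - b(G'')$ with $|E''| \le |E| - 2$ and $b(G'') = 0$ (still bipartite), giving $\dim PM(G-e) \le |E| - |V| - 1 < \dim PM(G) - 1$, a contradiction. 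Hence $G - e$ is matching covered; bipartiteness is inherited from $G$. The main obstacle I anticipate is the bookkeeping around the $C_4$ exclusion and the edge cases where $G - e$ loses a vertex or becomes disconnected: I would handle these by noting that in $C_4$ every edge-deletion leaves a path on $4$ vertices which is not matching covered and for which $x_e \geq 0$ is not facet-inducing (the polytope $PM(C_4)$ is a segment, and each nonnegativity constraint is tight at only one endpoint, which is a facet of a $1$-dimensional polytope — so actually $C_4$ is a genuine exception where (i) holds but (ii) fails), and that for all other bipartite matching covered graphs the argument above goes through; connectedness of $G-e$ is not needed since the dimension formula and the face identification both work componentwise.
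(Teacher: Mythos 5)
Your direction (ii) $\Rightarrow$ (i) is fine and matches the paper's argument. The gap is in (i) $\Rightarrow$ (ii), specifically in the line where you write $\dim PM(G'') = |E''| - |V| + 1 - b(G'')$ and deduce $\dim PM(G-e) \le |E|-|V|-1$. The formula \eqref{eq:pm-dimension} is stated for a \emph{connected} matching covered graph; when $G''$ has $\sigma$ components, the correct count is $\dim PM(G'') = |E''| - |V| + \sigma - b(G'')$, because each bipartite component contributes its own degeneracy among the degree equalities. Since each edge you delete beyond $e$ can split off one more component, one can have $\sigma$ as large as $|E| - |E''|$, and then $\dim PM(G-e) = |E''|-|V|+\sigma = |E|-|V| = \dim PM(G) - 1$: no contradiction. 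In other words, your ``$<$'' is really only ``$\le$'', and the boundary case (every deleted edge is a bridge in what remains) is precisely what has to be ruled out. This is not an empty edge case: for $G=C_4$ with $e$ deleted you get $|E''|=2$, $\sigma=2$, so the face defined by $x_e=0$ has dimension $0=\dim PM(C_4)-1$ and \emph{is} a facet even though $G-e$ is a path and not matching covered --- exactly the $C_4$ exception. You notice this yourself, but you have not explained why the tight case cannot occur for any other bipartite matching covered graph, which is the actual content of the excluded case.

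The paper's proof closes this gap by a structural argument that you would need to supply. In the tight case $\sigma(G[E']) = |E|-|E'|$, there is some $f \ne e$ not in $E'$ with $G-e-f$ disconnected; by $2$-edge-connectivity of $G$ the pair $\{e,f\}$ is then a $2$-edge-cut with shores $V_1, V_2$. A short parity analysis on $|V_1|,|V_2|$ (using $G\ne C_4$) produces a non-trivial tight odd cut forcing $f$ to lie in every perfect matching of $G-e$, contradicting $f \notin E'$. Without an argument of this kind --- or some other way to bound $\sigma$ strictly below $|E|-|E''|$ when $G \ne C_4$ --- the dimension count alone does not establish (i) $\Rightarrow$ (ii).
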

\begin{proof}
    To prove $(i) \Rightarrow (ii)$, first notice that $\dim(PM(G))=|E'|-|V|+ \sigma(G[E'])+1$ where $E' \subseteq E\backslash \{e\}$ is the set of edges that belong to some perfect matching of $G-e$. As this face is a facet of $PM(G)$ its dimension is exactly one smaller, which means \begin{equation*}
        |E'|-|V|+ \sigma(G[E'])+1 = |E|-|V|+2 -1,
    \end{equation*} implying $|E'|+\sigma(G[E'])=|E|$. By $2$-connectivity of $G$, the graph $G-e$ is connected and so the maximal number of components of $G[E']$ is $|E|-|E'|$. If $|E|-|E'|=1$ then every edge except $e$ is in $E'$, meaning $G-e$ is matching covered. Otherwise, let $f \in E\backslash E'$ and $F\neq e$. We must have that $\sigma(G-e-f)=2$, so let $V_1$ and $V_2$ be the two vertex sets separated by the cut $\{e,f\}$. We know that either $V_i$ have both even cardinality or both odd. In the first case, since $G$ is not $C_4$, at least one of $V_i$ has size at least $4$, say $V_1$.  the cut defined by $V_1 \backslash \{u\}$ where $u$ is the endpoint of $e$ inside $V_1$ will make a tight cut. In the second case, either one of the sets will have size at least $5$ or both will have size at least $3$. If $V_1$ has at least $5$ vertices, the set $V_1 \backslash \{u,w\}$ will define a tight cut where $u$ and $w$ are the endpoints of $e,f$ within $V_1$. If both sets have size at least $3$, the cut $\{e,f\}$ will be non-trivial and tight in $G$, which is not possible.
    
    Conversely, if $G-e$ is matching covered, the dimension of $PM(G-e)$ is exactly $|E|-1-|V|+2$, which means the dimension of the face of $PM(G)$ defined by $x_e=0$ is exactly one smaller than the dimension of $PM(G)$. Thus $(ii) \Rightarrow (i)$ holds.
\end{proof}

\begin{CO}
    The edge $e$ as in \Cref{thm:facets-bip} satisfies the definition of a removable edge from \Cref{sec:classical}, so the Algorithm \ref{algorithm:basis-bvn} described here will also give an ear-decomposition of a bipartite graph.
\end{CO}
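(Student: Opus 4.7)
The plan is to read off the removability of the edge $e$ directly from \Cref{thm:facets-bip} and then reinterpret the full sequence of edge deletions performed by Algorithm~\ref{algorithm:basis-bvn} as the reverse of an ear decomposition.

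A single ear with $k=2$, as defined in \Cref{sec:classical}, is precisely a single edge $e = v_1v_2$ between two existing vertices, and it is removable in $G$ when $G - e$ is matching covered. The implication $(i)\Rightarrow(ii)$ of \Cref{thm:facets-bip} states that whenever $x_e \geq 0$ defines a facet of $PM(G)$ for a bipartite matching covered $G \neq C_4$, the subgraph $G - e$ is again matching covered and bipartite. Thus the edge $e$ from \Cref{thm:facets-bip}(i) is exactly a removable single-edge ear, which proves the first clause of the corollary. To propagate this through Algorithm~\ref{algorithm:basis-bvn}, I would induct on the iteration index $t$, maintaining the invariant that $G[E^t]$ is bipartite and matching covered. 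Bipartiteness is inherited by subgraphs, and matching coveredness follows because the algorithm's pivot rule selects $e_t \in E^t$ with LP1$(t,e_t)$ strictly negative, which is exactly the condition that $x_{e_t} \geq 0$ defines a facet of $P^t = PM(G[E^t])$. Applying \Cref{thm:facets-bip} to $G[E^t]$ then yields that $G[E^{t+1}] = G[E^t] - e_t$ is again bipartite matching covered, closing the induction. Reading the resulting chain
\[
G[E^d] \subsetneq G[E^{d-1}] \subsetneq \cdots \subsetneq G[E^0] = G
\]
in reverse, each step attaches a single-edge removable ear to the previous graph, which matches the ear-decomposition structure of \Cref{sec:classical}, with the residual graph $G[E^d]$ serving as the base.

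The main obstacle is the hypothesis $G \neq C_4$ in \Cref{thm:facets-bip}: if some intermediate $G[E^t]$ contains a $C_4$ as a connected component, the theorem does not directly certify removability of the edge picked from that component. I would handle this by treating any such $C_4$ component as a base of the ear decomposition for that component, halting the ear-building process there while continuing in the remaining components. Since $C_4$ itself has a trivial direct treatment — its two perfect matchings already form a linearly independent set — this edge case does not damage the overall ear-decomposition interpretation.
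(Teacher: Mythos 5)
Your overall approach is the natural one and it matches the intent of the corollary: deduce removability from the implication $(i)\Rightarrow(ii)$ of \Cref{thm:facets-bip}, and then propagate it through the algorithm by induction, reading the chain of edge deletions backwards as an ear decomposition. That much is correct, and the paper gives no proof of its own, so this is essentially the argument the author has in mind.

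Two points deserve sharper treatment. First, you equate ``LP1$(t,e)$ has strictly negative value'' with ``$x_e\geq 0$ defines a facet of $P^t$.'' These are not equivalent in general: for a non-full-dimensional polytope such as $PM(C_6)$, the face $\{x:x_e=0\}$ is a facet for \emph{every} edge $e$, yet $x_e\geq 0$ is implied by the degree equalities together with the remaining nonnegativities, so LP1 has optimal value $0$, not negative. What LP1 negativity certifies is that $x_e\geq 0$ is \emph{irredundant}, a strictly stronger condition. Fortunately, irredundancy does imply facet-definition, which is the direction your induction needs, so the step is sound, but the equivalence you state is false and should be replaced by the one-sided implication.

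Second, your $C_4$ discussion is aimed at the wrong scenario. Since a connected matching covered graph on at least four vertices is $2$-connected, and each $G[E^{t+1}]$ is obtained from $G[E^t]$ by deleting a single edge, the graph $G[E^t]$ remains connected on the fixed vertex set $V$ throughout. A $C_4$ \emph{component} therefore never arises unless $|V|=4$, in which case $G[E^t]=C_4$ is the entire graph and the issue is simply that the theorem's hypothesis $G\neq C_4$ fails at the last nontrivial step. The fix is not to ``halt in that component'' but to note that $C_4$ itself is a $K_2$ with one single ear attached, and to treat the $|V|=4$ instance directly. Relatedly, the classical ear decomposition terminates at $K_2$, so ``with the residual graph $G[E^d]$ serving as the base'' is not quite enough: you should argue that the residual graph is a matching covered graph with a unique perfect matching, which for a connected bipartite graph forces $K_2$ (up to the $C_4$ base case just noted). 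With these two repairs, the induction and the reversal into an ear decomposition go through.
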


\subsection{Nonbipartite BvN graphs}

We begin with a useful characterization. \begin{theorem}\label{thm:facet-nonbip}
    Let $G$ be a BvN brick. If an inequality $x_e\geq 0$ is a facet defining inequality for $PM(G)$ then one of the following holds: \begin{itemize}
        \item[(a)] $G-e$ is perfect matching covered and is a near-brick;
        \item[(b)] there is a unique edge $e'\neq e$ such that $G-e-e'$ is bipartite and $x_e = x_{e'}$ for all $x \in PM(G)$.
    \end{itemize}
    Furthermore, if $G$ is a BvN brick with an edge $e$ such that (a) or (b) holds, then $x_e\geq 0$ is a facet defining inequality for PM(G).
\end{theorem}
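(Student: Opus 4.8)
\medskip

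The plan is to prove both directions by relating the dimension of the face $PM(G)\cap\{x:x_e=0\}$ to the combinatorial structure of $G-e$, exactly as in the bipartite argument of \Cref{thm:facets-bip} but now keeping track of bricks via \eqref{eq:pm-dimension}. Write $F_e := PM(G)\cap\{x:x_e=0\}$; since $G$ is BvN, $F_e$ is precisely $PM(G-e')$ restricted to the edges that lie in some perfect matching of $G-e$. The inequality $x_e\ge 0$ is facet-defining iff $\dim F_e = \dim PM(G)-1$, and $\dim PM(G) = |E|-|V|+1-1 = |E|-|V|$ since $G$ is a brick ($b=1$). So the whole proof reduces to: $\dim F_e = |E|-|V|-1$ if and only if (a) or (b) holds.

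\medskip

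For the ``if'' direction: if (a) holds, then $G-e$ is matching covered with one brick, so $\dim PM(G-e) = |E|-1-|V|+1-1 = |E|-|V|-1$, and $F_e = PM(G-e)$ has the right dimension. If (b) holds, then on $F_e$ we have the additional identity $x_e = x_{e'}$; since $x_e=0$ on $F_e$ this forces $x_{e'}=0$ too, so $F_e = PM(G-e-e')$, which is the perfect matching polytope of a matching covered bipartite graph (a brace, since $G$ is a brick — I should check that deleting two edges from a brick cannot create tight cuts beyond what the bipartiteness forces, or simply compute: a matching covered bipartite graph on $|V|$ vertices with $|E|-2$ edges has $\dim = (|E|-2)-|V|+1-0 = |E|-|V|-1$), again the right dimension. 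So in both cases $x_e\ge 0$ is facet-defining.

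\medskip

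For the ``only if'' direction, the main work: suppose $x_e\ge 0$ is facet-defining, so $\dim F_e = |E|-|V|-1$. Let $E'\subseteq E\setminus\{e\}$ be the edges lying in some perfect matching of $G-e$, and let $b'$ be the number of bricks in a tight cut decomposition of $G[E']$; then $|E'|-|V|+1-b' = |E|-|V|-1$, i.e. $|E'| + b' = |E| + |V| - 2 - \dots$ — more precisely $|E\setminus E'| = 2 - b'$. Since $G[E']$ is matching covered and spans $V$ (here I use $2$-connectivity of $G$, so $G-e$ is connected and every vertex is still covered), $b'\ge 0$, so there are two cases: $|E\setminus E'| = 2$ with $b'=0$, or $|E\setminus E'|=1$ with $b'=1$. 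The second case gives $G-e$ matching covered and a near-brick — this is (a). In the first case, $G-e$ has exactly one edge $e'\neq e$ not covered by any perfect matching of $G-e$, and $G[E']=G-e-e'$ is bipartite (since $b'=0$). It remains to argue $x_e = x_{e'}$ on all of $PM(G)$: since $e'$ is in no perfect matching of $G-e$, every perfect matching $M$ of $G$ with $e'\in M$ must also contain $e$; and symmetrically (as $G$ is a brick and hence has no tight cut, the pair $\{e,e'\}$ cannot be a nontrivial odd cut, but it could be — here I expect to need the structure more carefully) every perfect matching containing $e$ contains $e'$. Establishing this symmetric statement rigorously is the crux: I would argue that $\{e,e'\}$ forms an edge cut of $G$ (else $G[E']=G-e-e'$ connected and bipartite, but then some perfect matching avoiding $e$ — contradiction with $e'$ being uncovered unless $e'$ is itself a cut edge), and in a brick the only way a two-edge cut $\{e,e'\}$ interacts with perfect matchings is that every perfect matching uses both or neither, which is exactly $x_e=x_{e'}$; uniqueness of $e'$ is then immediate. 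This final combinatorial step — pinning down that $G-e-e'$ bipartite plus $e'$ uncovered in $G-e$ forces the clean cut structure $x_e=x_{e'}$ — is where I expect the real difficulty, and it likely mirrors the $2$-separation / tight-cut case analysis already carried out in the proof of \Cref{thm:facets-bip}.
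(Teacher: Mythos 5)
Your overall strategy is the same as the paper's: compute the dimension of the face $F_e := PM(G)\cap\{x:x_e=0\}$, equate it to $\dim PM(G)-1$, and turn the resulting arithmetic into a case analysis on $E\setminus E'$ where $E'$ is the set of edges lying in some perfect matching of $G-e$. The difference is cosmetic — you parameterize by the brick count $b'$ of $G[E']$, the paper by the number $\sigma_b$ of bipartite components of $G[E']$; for BvN graphs these are interchangeable via $\dim P(H) = \dim PM(H)$. However there are two genuine gaps in your write-up.

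First, your dimension formula $\dim PM(G[E']) = |E'|-|V|+1-b'$ silently assumes $G[E']$ is connected. In general it is $|E'|-|V|+c-b'$ where $c$ is the number of components, giving $|E\setminus E'| = c+1-b'$ rather than $2-b'$. You justify connectivity of $G-e$ (true, by $2$-connectivity of $G$), but that is not the same as connectivity of $G[E']$: removing the non-matching-covered edges of $G-e$ can disconnect it. The case you thereby skip — $|E\setminus E'|=3$, $b'=0$, $c=2$, i.e. $G[E']$ falling into two bipartite components — is precisely the paper's $k=2$ case, which it rules out with a dedicated argument (tracking which sides of the two bipartitions the three removed edges connect, and deriving that $G$ would then be bipartite). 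Your reasoning is circular: you use $c=1$ to conclude $|E\setminus E'|\le 2$, but $|E\setminus E'|\le 2$ (plus $3$-connectivity) is what one would use to justify $c=1$.

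Second, your sketch for establishing $x_e=x_{e'}$ on all of $PM(G)$ in case (b) heads in the wrong direction. You try to argue $\{e,e'\}$ is an edge cut; it is not. If $G-e-e'$ is bipartite with bipartition $(A,B)$, the edges $e$ and $e'$ lie \emph{inside} $A$ and inside $B$ respectively (this is exactly what the paper records: "$e$ and $e_1$ are inside the opposite sides of the bipartition"), and $\delta_G(A)=E\setminus\{e,e'\}$, so $\{e,e'\}$ is the complement of a cut, not a cut. The correct route is to sum the degree constraints over $A$ and over $B$: for any $x\in PM(G)$ one gets $|A| = 2x_e + x(\delta(A))$ and $|B| = 2x_{e'} + x(\delta(A))$, so $x_e-x_{e'}$ is constant on $PM(G)$; evaluating at a perfect matching of $G-e-e'$ (which exists, as $G[E']$ is matching covered) gives $x_e=x_{e'}$. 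Your parenthetical "contradiction with $e'$ being uncovered" does not produce a contradiction — $G-e-e'$ being connected, bipartite, and matching covered is perfectly consistent with $e'$ lying in no perfect matching of $G-e$, which is exactly the situation at hand.
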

\begin{proof}
    We proceed by computing the affine dimension of $PM(G)$ and its facet. Notice that the equalities $\delta(v)=1$ are either independent or have rank $|V|-1$ if $G$ is bipartite. Thus, the dimension of (\ref{eq:pm-bipartite}) is exactly $|E|-|V|+\sigma_b(G)=|E|-|V|$, where $\sigma_b(H)$ denotes the number of bipartite connected components of $H$. 
    Now, consider a face defined by setting $x_e=0$. All vertices of this face are exactly the perfect matchings that do not use $e$ or, in other words, the perfect matchings of $G-e$. If $G-e$ is not perfect matching covered, then there is at least one edge $e'$ which is no longer in any perfect matching. Let $E'$ be the set of edges that belong to some perfect matching of $G-e$. Then, the vertices of the face defined by $x_e=0$ are exactly the perfect matchings of $G[E']$, so $PM(G[E'])$ has dimension $|E'|-|V|+\sigma_b(G[E'])$. Since this is one smaller than the original dimension, we get the following \begin{equation*}
        \sigma_b(G[E'])=|E\backslash E'|-1.
    \end{equation*} Let us denote this quantity by $k$. This means that by removing $k+1$ edges from $G$, the graph has at least $k$ bipartite components left. We will prove that unless $k\geq 2$ there is a $2$-separation in $G$, contradicting the fact that $G$ was a brick.

    Indeed, let $C_1, C_2, \ldots, C_k$ be the bipartite components. First, suppose $|\delta(C_i)|\leq 2$ for some component. It cannot be less than $2$ since it would mean that in $G$, there is a cut-edge, so it does not belong to any perfect matching. Then, let $u\in C_i$ and $v\notin C_i$ be the endpoints of two different edges leaving $C_i$. It is easy to see that $G-u-v$ has at least two odd components, so $\{u,v\}$ forms a $2$-separation. Now, we have $\delta(C_i)\geq 3$. Counting this over all components, we have \begin{equation}\label{eq:components-count}
        3k\leq \sum_i |\delta(C_i)|=2|E\backslash E'|-N \leq 2(k+1),
    \end{equation} where $N=0$ if $C_i$ are the only components and $N=\sum_j \delta(D_j)$ if $G[E']$ has more non-bipartite components $D_j$ (notice that there are at most two of those). From (\ref{eq:components-count}) we deduce $k\leq 2$.

    \underline{$k=2:$} in this case, (\ref{eq:components-count}) holds with equality, so there are no non-bipartite components. Let $A_i,B_i$ be the bipartition of $C_i$ for $i=1,2$. There are three edges $e,e_1,e_2$ in $E\backslash E'$ and $x_e=0$ implies $x_{e_1}=x_{e_2}=0$ for all $x \in PM(G)$.
    Without loss of generality, let $e$ connect $A_1$ to $B_2$. Then both $e_1$ and $e_2$ must connect $B_1$ to $A_2$, as otherwise there cannot be a perfect matching using $e$ and one of $e_1, e_2$. However, this makes the original graph bipartite, which is not possible.
    
    \underline{$k=1$:} this means that $E'=E\backslash\{e, e_1\}$ and $G-e-e_1$ is a perfect matching covered bipartite graph. Moreover, $e$ and $e_1$ are inside the opposite sides of the bipartition.

    \underline{$k=0$:} this means $G-e$ is perfect matching covered.

    To prove the converse, it suffices to redo the dimension calculations from the above part.
\end{proof}

\begin{CO}
    An edge $e$ or a pair $\{e,e'\}$ found in \Cref{thm:facet-nonbip} correspond to a removable single ear or a removable double ear from \Cref{sec:classical}. Thus, the Algorithm \ref{algorithm:basis-bvn} described above also gives an ear decomposition of the graph.
\end{CO}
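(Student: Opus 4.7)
The plan is to decode the two cases of \Cref{thm:facet-nonbip} into the ear-decomposition language of \Cref{sec:classical}, and then to observe that the sequence of facet-defining edges selected by Algorithm~\ref{algorithm:basis-bvn} traces out an ear decomposition of the input graph.

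First I would record the structural fact that in any brick every ear reduces to a single edge: an internal degree-$2$ vertex on a longer path would, together with its two neighbors, induce a $2$-separation and hence a nontrivial tight cut, contradicting brickness. So, restricted to bricks, a removable single ear is just an edge $e$ with $G-e$ matching covered, and a removable double ear is a pair of vertex-disjoint edges $\{e,e'\}$ with $G-e-e'$ matching covered.

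For case (a) of \Cref{thm:facet-nonbip} the conclusion is immediate: $G-e$ is matching covered by hypothesis, so $e$ is a removable single ear. For case (b), the theorem furnishes a unique $e'\neq e$ with $G-e-e'$ bipartite, and the proof of \Cref{thm:facet-nonbip} (the $k=1$ sub-case) in fact establishes more: $G-e-e'$ is matching covered, and $e$ and $e'$ lie inside opposite sides of its bipartition. The last fact forces $e$ and $e'$ to share no vertex, so $\{e,e'\}$ is indeed a removable double ear.

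For the second assertion, each iteration of Algorithm~\ref{algorithm:basis-bvn} either deletes a single edge $e$ (case (a)) or may be regarded as deleting a pair $\{e,e'\}$: in case (b), once the algorithm imposes $x_e=0$ the equality $x_{e'}=0$ becomes implicit in $P^{t+1}$ (since $x_e=x_{e'}$ on $PM(G)$), so $e'$ lies in no perfect matching of the current graph and can be removed from $E^{t+1}$ without altering the polytope. Reading the resulting chain $G=G^0\supset G^1\supset\ldots$ of matching covered graphs in reverse then exhibits an ear decomposition. The main subtlety is that the intermediate graphs $G^t$ need not themselves be bricks, so one cannot literally reapply \Cref{thm:facet-nonbip} to $G^t$; I would address this by tight-cut decomposing $G^t$ at each step and invoking \Cref{thm:facet-nonbip} on the brick component together with \Cref{thm:facets-bip} on any braces, paralleling the inductive construction already used in \Cref{sec:classical}.
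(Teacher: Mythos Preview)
The paper states this corollary without proof, treating it as an immediate observation following \Cref{thm:facet-nonbip}. Your argument for the first sentence is exactly the intended one: case~(a) literally says $G-e$ is matching covered, and in case~(b) the $k=1$ sub-case of the proof gives that $G-e-e'$ is matching covered and bipartite with $e,e'$ lying inside opposite colour classes, hence vertex-disjoint. That is all the paper has in mind.

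For the second sentence you actually do more than the paper does. You correctly flag that after one removal the graph need no longer be a brick, so \Cref{thm:facet-nonbip} cannot be re-invoked verbatim; the paper simply does not address this and reads the sentence as a heuristic connection rather than a formal claim. Your proposed fix (tight-cut decompose $G^t$, apply \Cref{thm:facet-nonbip} to the unique brick and \Cref{thm:facets-bip} to the braces) is a reasonable way to make it rigorous, and it mirrors the recursion in \Cref{sec:classical}. One remark: you do not need the full machinery in case~(b), since once $\{e,e'\}$ is removed the graph is bipartite and stays bipartite, so \Cref{thm:facets-bip} applies directly for all remaining steps; the tight-cut detour is only needed to handle iterated case~(a) removals, where the resulting near-brick may acquire nontrivial tight cuts.
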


\section{Proof of Theorem \ref{thm:fdi-charact}}
\begin{proof}
    Denote by $G_1$ and $G_2$ two $C$-contractions of $G$.
    The vertices on the face $F_C:=P(G)\cap \{x: x(C)=1\}$ are exactly the perfect matchings that intersect $C$ once. Therefore, there is a bijection between such perfect matchings and pairs $(M_1,M_2)$ of perfect matchings of $G_1,G_2$ with $M_1\cap M_2 = \{e\}$ for some $e \in C$.
    Therefore, we have the following dimension formula as a consequence of Theorem \ref{thm:basis-comb}: \begin{equation}
        \dim(F_C)=\dim(P(G_1))+\dim(P(G_2))+1-|C|.
    \end{equation}
    By (\ref{eq:pm-dimension}):\begin{equation*}
        \dim(F_C) = |E_1'|-|V_1|+1-b(G_1)+|E_2'|-|V_2|+1-b(G_2)+1-|C|,
    \end{equation*} where $E_i'$ are matching-covered edges of $G_i$. For convenience, let $|E_i|-|E_i'|=r(E_i)\geq 0$ the edges which do not belong to a perfect matching in $G_i$. 
    
    We have $|E_1|+|E_2|=|E|+|C|$ and $|V_1|+|V_2|=|V|+2$, so \begin{align}\label{eq:dimension-add}
        \dim(F_C)&=|E|-|V|+1-b(G_1)-b(G_2)-r(G_1)-r(G_2) \\&= \dim(P(G))-1 + (2-b(G_1)-b(G_2)-r(G_1)-r(G_2)).
    \end{align}

    To prove $\Rightarrow$, assume that $C$ defines a facet of $PM(G)$, so $\dim(F_C)=\dim(P(G))-1$ and therefore \begin{equation*}
        2 = b(G_1)+b(G_2)+r(G_1)+r(G_2).
    \end{equation*} Since $C$ is not equivalent to $x_e=0$ for any $e\in E$, there is at least one vertex in $F_C$ with $x_e=1$, so each $e$ belongs to some perfect matching $M$ of $G$ with $|M\cap C|=1$. Its projections to $G_1$ and $G_2$ will correspond to perfect matchings and thus $r(G_i)=0$. Therefore, $2 = b(G_1)+b(G_2)$. It suffices to prove $b(G_i)\geq 1$. Indeed, if one of the terms was zero, say $b(G_1)=0$, it would mean that one of the $C$-contractions was bipartite and thus $x(C)$ can be written as a linear combination of $x(\delta(v))$. But this would imply that $C$ is a tight cut and it cannot define a facet of $G$.

    To prove $\Leftarrow$, if both $G_i$ are matching-covered near-bricks, then (\ref{eq:dimension-add}) implies $\dim(F_C)=\dim(P(G))-1$. Moreover, every $e\in E$ belongs to some perfect matching of $G$ with $|M\cap C|=1$ by combining appropriate perfect matchings of $G_i$, so $F_C$ is not contained in $\{x:x_e=0\}$.
\end{proof}

\end{document}